\newtheorem{thm}{Theorem}[section]
\newtheorem{lem}[thm]{Lemma}
\newtheorem{cor}[thm]{Corollary}
\newtheorem{prop}[thm]{Proposition}
\theoremstyle{definition}
\newtheorem{definition}[thm]{Definition}
\newtheorem{example}[thm]{Example}
\theoremstyle{remark}
\newtheorem{rem}[thm]{Remark}
\numberwithin{equation}{section}
\subjclass[2010]{Primary 20G42, 17B37; Secondary 81R50.}
\thanks
{The present work was initiated during a visit by the first author to University of Sydney in October 2012 and he gratefully acknowledges the support and hospitality extended to him there.  He was also supported by the Danish National Research Foundation center of Excellence, Center for Quantum Geometry of Moduli Spaces (QGM).\\
The second and third authors were supported by the Australian Research Council, through Discovery Projects DP120103432 (Quantised algebras, supersymmetry and invariant theory) and DP0772870 (Invariant theory, cellularity and geometry).}
\keywords {quantum invariants, cellular algebras, tilting modules}
\begin{document}
\normalfont
\newcommand{\thmref}[1]{Theorem~\ref{#1}}
\newcommand{\secref}[1]{Section~\ref{#1}}
\newcommand{\lemref}[1]{Lemma~\ref{#1}}
\newcommand{\propref}[1]{Proposition~\ref{#1}}
\newcommand{\corref}[1]{Corollary~\ref{#1}}
\newcommand{\remref}[1]{Remark~\ref{#1}}
\newcommand{\eqnref}[1]{(\ref{#1})}
\newcommand{\exref}[1]{Example~\ref{#1}}

\newcommand{\nc}{\newcommand}

\nc{\on}{\operatorname}

\nc{\Z}{{\mathbb Z}}
\nc{\bQ}{{\mathbb Q}}
\nc{\C}{{\mathbb C}}
\nc{\R}{{\mathbb R}}
\nc{\bbP}{{\mathbb P}}
\nc{\bF}{{\mathbb F}}

\nc{\boldD}{{\mathbb D}}
\nc{\oo}{{\mf O}}
\nc{\N}{{\mathbb N}}
\nc{\bib}{\bibitem}
\nc{\pa}{\partial}
\nc{\F}{{\mf F}}
\nc{\CA}{{\mathcal A}}
\nc{\cD}{{\mathcal D}}
\nc{\CE}{{\mathcal E}}
\nc{\CP}{{\mathcal P}}
\nc{\CO}{{\mathcal O}}
\nc{\CK}{{\mathcal K}}
\nc{\CN}{{\mathcal N}}
\nc{\CU}{{\mathcal U}}
\nc{\Res}{\text{Res}}
\nc{\Ind}{\text{Ind}}
\nc{\Ker}{\text{Ker}}
\nc{\id}{\text{id}}
\nc{\ot}{\otimes}

\nc{\be}{\begin{equation}}
\nc{\ee}{\end{equation}}

\nc{\rarr}{\rightarrow}
\nc{\larr}{\longrightarrow}
\nc{\al}{\alpha}
\nc{\ri}{\rangle}
\nc{\lef}{\langle}

\nc{\W}{{\mc W}}
\nc{\gam}{\ol{\gamma}}
\nc{\Q}{\ol{Q}}
\nc{\q}{\widetilde{Q}}
\nc{\la}{\lambda}
\nc{\ep}{\epsilon}
\nc{\ve}{\varepsilon}

\nc{\g}{\mf g}
\nc{\h}{\mf h}
\nc{\n}{\mf n}
\nc{\bb}{\mf b}
\nc{\G}{{\mf g}}

\nc{\D}{\mc D}
\nc{\cE}{\mc E}
\nc{\CF}{\mc F}
\nc{\CC}{\mc C}
\nc{\CH}{\mc H}
\nc{\CL}{\mc L}
\nc{\CT}{\mc T}
\nc{\CI}{\mc I}
\nc{\CR}{\mc R}

\nc{\UK}{{\mc U}_{\CA_q}}

\nc{\CS}{\mc S}

\nc{\CB}{\mc B}

\nc{\Li}{{\mc L}}
\nc{\La}{\Lambda}
\nc{\is}{{\mathbf i}}
\nc{\V}{\mf V}
\nc{\bi}{\bibitem}
\nc{\NS}{\mf N}
\nc{\dt}{\mathord{\hbox{${\frac{d}{d t}}$}}}
\nc{\E}{\mc E}
\nc{\ba}{\tilde{\pa}}
\nc{\half}{\frac{1}{2}}

\def\smapdown#1{\big\downarrow\rlap{$\vcenter{\hbox{$\scriptstyle#1$}}$}}

\nc{\mc}{\mathcal}
\nc{\ov}{\overline}
\nc{\mf}{\mathfrak}
\nc{\ol}{\fracline}
\nc{\el}{\ell}
\nc{\etabf}{{\bf \eta}}
\nc{\zetabf}{{\bf
\zeta}}\nc{\x}{{\bf x}}
\nc{\xibf}{{\bf \xi}} \nc{\y}{{\bf y}}
\nc{\WW}{\mc W}
\nc{\SW}{\mc S \mc W}
\nc{\sd}{\mc S \mc D}
\nc{\hsd}{\widehat{\mc S\mc D}}
\nc{\parth}{\partial_{\theta}}
\nc{\cwo}{\C[w]^{(1)}}
\nc{\cwe}{\C[w]^{(0)}} \nc{\wt}{\widetilde}
\nc{\gl}{\mf gl}
\nc{\K}{\mf k}

\newcommand{\U}{{\rm{U}}}
\newcommand{\TL}{{\rm{TL}}}
\newcommand{\Aut}{{\rm{Aut}}}
\newcommand{\ch}{{\rm{char}}}
\newcommand{\End}{{\rm{End}}}
\newcommand{\Hom}{{\rm{Hom}}}
\newcommand{\Mod}{{\rm{Mod}}}
\newcommand{\Rad}{{\rm{Rad}}}
\newcommand{\Ext}{{\rm{Ext}}}
\newcommand{\Tor}{{\rm{Tor}}}
\newcommand{\Lie}{{\rm{Lie}}}
\newcommand{\Uq}{{{\mathcal U}_v}}
\newcommand{\GL}{{\rm{GL}}}
\newcommand{\Sym}{{\rm{Sym}}}
\newcommand{\rk}{{\rm{rk}}}
\newcommand{\ord}{{\rm{ord}}}
\newcommand{\tr}{{\rm{tr}}}
\newcommand{\Rea}{{\rm{Re}}}
\newcommand{\rank}{{\rm{rank}}}
\newcommand{\im}{{\rm{Im}}}

\advance\headheight by 2pt

\nc{\fb}{{\mathfrak b}} \nc{\fg}{{\mathfrak g}}
\nc{\tA}{\widetilde{A}}
\nc{\fh}{{\mathfrak h}}  \nc{\fk}{{\mathfrak k}}

\nc{\fl}{{\mathfrak l}} \nc{\fn}{{\mathfrak n}}

\nc{\fp}{{\mathfrak p}} \nc{\fu}{u}


\nc{\fS}{{\Sym}}

\nc{\fsl}{{\mathfrak {sl}}} \nc{\fsp}{{\mathfrak {sp}}}
\nc{\fso}{{\mathfrak {so}}} \nc{\fgl}{{\mathfrak {gl}}}

\nc{\A}{\mc A} \nc{\cF}{{\mathcal F}}

\nc{\cA}{{\mathcal A}} \nc{\cP}{{\mathcal P}} \nc{\cC}{{\mathcal C}}
\nc{\cU}{{\mathcal U}} \nc{\cB}{{\mathcal B}}

\def\lr{{\longrightarrow}}
\def\inv{{^{-1}}}

\def\xl{{\hbox{\lower 2pt\hbox{$\scriptstyle \mathfrak L$}}}}

\nc{\bX}{{\mathbf X}} \nc{\bx}{{\mathbf x}} \nc{\bd}{{\mathbf d}}
\nc{\bdim}{{\mathbf dim}} \nc{\bm}{{\mathbf m}}

\title[Cellularity of quantum endomorphism algebras]{Cellularity of certain quantum endomorphism algebras.}

\author{H.H. Andersen, G.I. Lehrer and R.B. Zhang}
\address{QGM,
Det Naturvidenskabelige Fakultet,
Aarhus Universitet, Ny Munkegade Bygning 1530,
8000 Aarhus C, Denmark}
\address{School of Mathematics and Statistics,
University of Sydney, N.S.W. 2006, Australia}
\email{mathha@imf.au.dk, gustav.lehrer@sydney.edu.au, ruibin.zhang@sydney.edu.au}

\begin{abstract} Let $\tA=\Z[q^{\pm \frac{1}{2}}][([d]!)\inv]$ and let
$\Delta_{\tA}(d)$ be an integral form of the Weyl module
of highest weight $d \in \N$ of the quantised enveloping algebra $\U_{\tA}$ of $\fsl_2$.
We exhibit for all positive integers $r$ an explicit cellular structure for $\End_{\U_{\tA}}(\Delta_{\tA}(d)^{\ot r})$. When $\zeta$ is a root of
unity of order bigger than $d$ we consider
the specialisation $\Delta_{\zeta}(d)^{\ot r}$ at $q\mapsto \zeta$
of $\Delta_{\tA}(d)^{\ot r}$. We prove one general result which gives sufficient conditions for the commutativity of
specialisation with the taking of endomorphism algebras, and another which relates the multiplicities of indecomposable
summands to the dimensions of simple modules for an endomorphism algebra.
Our cellularity result then allows us to prove that knowledge of the dimensions of the simple modules of the specialised cellular 
algebra above is equivalent to knowledge of the weight multiplicities of the tilting modules for $\U_{\zeta}(\fsl_2)$.
In the final section we independently determine the weight multiplicities of indecomposable tilting modules for $U_\zeta(\fsl_2)$ and the  decomposition numbers of the endomorphism algebras. We indicate how our earlier results imply that either one of these sets of numbers determines the other.
\end{abstract}
\maketitle


\section{Introduction.}

\subsection{Notation}
Let $A$ be the ring $\Z[q^{\pm \frac{1}{2}}]$ where $q$ is an indeterminate,
and let $U_A$ be the Lusztig $A$-form \cite{L1,L2,L3} of the quantised enveloping algebra 
$\U_q(\fsl_2)$ \cite{D,J,CP},
which has basis consisting of `divided powers' of the generators of $\fsl_2$.
Let $\Delta_A(d)$ be the simple
$\U_A$-module with highest weight $d \in \N$. This has dimension $d+1$ and quantum dimension
equal to the quantum number $[d+1]$, where for any integer $n$, 
$[n]=[n]_q:=\frac{q^n-q^{-n}}{q-q\inv}$.

For any commutative  $A$-algebra $\tA$, we write $\U_{\tA}:=\tA\otimes_A \U_A$,
and similarly for $\Delta_{\tA}(d)$, etc.
For any positive integer $r$, let $E_r(d,\tA):=\End_{\U_{\tA}}(\Delta_{\tA}(d)^{\ot r})$.

Let $s_1,\dots,s_{N-1}$ be the standard Coxeter generators of $\Sym_{N}$. For $w\in \Sym_{N}$,
write $\ell(w)$ for its length as a word in the generators $s_i$, and
define the left set $L(w):=\{i\mid \ell(s_iw)<\ell(w)\}$; the
right set $R(w)$ is defined similarly.

\subsection{The main result} Let $K=\bQ(q^{\frac{1}{2}})$ be the
field of fractions of $A$.
Writing $B_r$ for the $r$-string braid group ($r$ a positive integer),
it is known that there is an
action of $B_r$ on $\Delta_{A}(d)^{\ot r}$, in which the standard generators of the
braid group act on successive tensor factors via the $R$-matrix $\check R$. This is
evident over $K$, and
from \cite{LZ1,LZ2}, and \cite{ALZ} or \cite{TA} (using \cite{KR}) in the above integral form. This action respects the
$\U_{\tA}$-action on the tensor space, and so there is a homomorphism
\be\label{eq:eta}
\eta: \tA B_r\lr \End_{\U_{\tA}}(\Delta_{\tA}(d)^{\ot r})=E_r(d,\tA).
\ee
We define $A$ using $q^{\frac{1}{2}}$ instead of $q$ because then with the usual definitions of $\U_q$, the $R$-matrix 
is defined over $A$ with respect to a basis of weight vectors.

In \cite{LZ1} it was shown that when $\tA=K$, $\eta$ is surjective. This provides a 
means of studying the relevant endomorphism algebras. When $d = 2$ this surjectivity was proved in \cite{TA} for most $\tilde A$. We haven't been able to establish this result for $d > 2$. However, inspired in part by the methods used in loc. cit. we show in this paper that the endomorphism algebras have a nice cellular structure, even though the $R$-matrix generators satisfy a polynomial equation of degree $d+1$.

We shall work with the Temperley-Lieb algebra $\TL_N(\tA)$, which has generators
$f_i$, $i=1,\dots,N-1$ and relations $f_if_{i\pm 1}f_i=f_i$ and $f_i^2=(q+ q\inv)f_i$.
This has an $\tA$-basis consisting of planar diagrams, as explained in \cite[\S 1]{GL96} (see also \cite{GL03,GL04});
these are in $1-1$ correspondance with the set of fully commutative elements of $\Sym_N$, see \cite{FG}.

We shall prove here that 
\begin{thm}\label{thm:cell} Let $d\geq 1$ be an integer.
For any $\tA$ such that $[d]!$ is invertible in 
$\tA$, the algebra $E_r(d,\tA)$ is isomorphic to a cellular subalgebra of $\TL_{rd}(\tA)$.
In particular, it has an $\tA$-basis labelled by planar diagrams
$D\in\TL_{rd}(\tA)$ such that $L(D),R(D)\subseteq \{d,2d,\dots,(r-1)d\}$, where the 
left and right sets $L(D)$ and $R(D)$ are as in Definition \ref{def:lrsets} below.
\end{thm}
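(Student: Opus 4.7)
The plan is to realize $E_r(d,\tA)$ as an idempotent truncation of the Temperley--Lieb algebra $\TL_{rd}(\tA)$, and then exploit the cellular structure of the latter. The starting point is the classical Schur--Weyl duality for quantum $\fsl_2$: the natural module $\Delta_{\tA}(1)$ has dimension $2$, and by \cite{LZ1} (integral version in \cite{ALZ} or \cite{TA}) there is an isomorphism of $\tA$-algebras $\TL_{rd}(\tA)\cong \End_{\U_{\tA}}(\Delta_{\tA}(1)^{\ot rd})$, where the generators $f_i$ act via the $R$-matrix on consecutive tensor factors. The assumption that $[d]!$ is invertible in $\tA$ guarantees the existence of the Jones--Wenzl idempotent $e_d\in\TL_d(\tA)$; its defining property is that $e_d f_i=f_i e_d=0$ for $1\le i\le d-1$, and a direct character/idempotent argument shows that $e_d\cdot \Delta_{\tA}(1)^{\ot d}\cong \Delta_{\tA}(d)$ as $\U_{\tA}$-modules.

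Setting $e:=e_d^{\ot r}\in\TL_{rd}(\tA)$, where the $i$-th copy of $e_d$ is placed on the block of strands $\{(i-1)d+1,\dots,id\}$, we obtain
\[
\Delta_{\tA}(d)^{\ot r}\ \cong\ e\cdot \Delta_{\tA}(1)^{\ot rd},
\]
and hence
\[
E_r(d,\tA)\ =\ \End_{\U_{\tA}}(\Delta_{\tA}(d)^{\ot r})\ \cong\ e\,\TL_{rd}(\tA)\,e,
\]
which is naturally a (unital, with identity $e$) subalgebra of $\TL_{rd}(\tA)$. The next step is to cut down the Graham--Lehrer diagrammatic basis of $\TL_{rd}(\tA)$ to this corner. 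Since $e_d$ annihilates $f_i$ for $i\notin\{d,2d,\dots,(r-1)d\}$ on either side, a diagram $D$ satisfies $eDe\ne 0$ precisely when no elementary cap at the top and no elementary cup at the bottom of $D$ is supported on such an index $i$; translating to the Coxeter description of fully commutative elements via \cite{FG}, this is exactly the condition $L(D), R(D)\subseteq \{d,2d,\dots,(r-1)d\}$. For such $D$ one has $eDe = D$ (up to harmless adjustments absorbed into the basis), so these diagrams form an $\tA$-basis of $E_r(d,\tA)$.

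Cellularity then follows from the Graham--Lehrer cellular structure on $\TL_{rd}(\tA)$, whose cells are indexed by through-strand numbers with cell data given by half-diagrams. The idempotent $e$ is invariant under the anti-involution of $\TL_{rd}(\tA)$ (since each $e_d$ is), and it is a sum of cell idempotents in a sense compatible with the cell chain; consequently the idempotent truncation $e\TL_{rd}(\tA)e$ inherits a cellular structure (this is the König--Xi principle for idempotent subalgebras of cellular algebras, which here is transparent because the cellular basis elements $C^{\la}_{S,T}$ satisfy $eC^{\la}_{S,T}e = C^{\la}_{S,T}$ or $0$). The cell datum on $E_r(d,\tA)$ is the restriction: cells are labelled by those $\la$ with through-strand number admitting at least one half-diagram whose ``defect pattern'' lies in $\{d,2d,\dots,(r-1)d\}$, and within each cell one indexes by such half-diagrams.

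The main obstacles are twofold. First, the realization $\Delta_{\tA}(d)\cong e_d\Delta_{\tA}(1)^{\ot d}$ and the identity $E_r(d,\tA)\cong e\TL_{rd}(\tA)e$ must be established \emph{integrally}, not just after base change to $K$; this is where the invertibility of $[d]!$ is essential (it is the exact hypothesis needed for $e_d$ to exist and to be a genuine projector onto a summand whose base change to $K$ is $\Delta_K(d)$). Second, the transfer of cellularity from $\TL_{rd}(\tA)$ to the corner $e\TL_{rd}(\tA)e$ requires one to verify that the cellular basis restricts correctly and that the anti-involution and partial order descend; the characterization of surviving basis elements via $L(D),R(D)\subseteq\{d,2d,\dots,(r-1)d\}$ is the bookkeeping device that makes this explicit.
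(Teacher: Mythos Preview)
Your overall architecture matches the paper's: realize $E_r(d,\tA)$ as the corner $e\TL_{rd}(\tA)e$ with $e=e_d^{\ot r}$, then extract a cellular basis. However there is a genuine gap in your treatment of the basis.

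The claim ``for such $D$ one has $eDe=D$ (up to harmless adjustments)'' is false. The Jones--Wenzl idempotent $e_d$ is a nontrivial $\tA$-linear combination of many Temperley--Lieb diagrams, so $eDe$ is a linear combination of diagrams, not a single diagram; in particular the leading term is $D$ but the lower terms do not vanish and are not ``harmless''. For the same reason your assertion that $eC^\lambda_{S,T}e$ equals $C^\lambda_{S,T}$ or $0$ is incorrect, so the K\"onig--Xi style argument as you have stated it does not go through. What you have actually shown is only that $eDe=0$ whenever $L(D)$ or $R(D)$ meets the complement of $\{d,2d,\ldots,(r-1)d\}$; the converse, and more importantly the \emph{linear independence} of the surviving elements $\{eDe : L(D),R(D)\subseteq\{d,\ldots,(r-1)d\}\}$, is the substantive step you have skipped.

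The paper fills this gap by a counting argument over the field $K$: writing $b(d,r;t)$ for the number of monic planar diagrams $t\to rd$ with $L(D)\subseteq\{d,\ldots,(r-1)d\}$ and $m(d,r;t)$ for the multiplicity of $\Delta_K(t)$ in $\Delta_K(d)^{\ot r}$, one shows that both satisfy the same recursion in $r$ (Clebsch--Gordan for $m$, an explicit diagrammatic extension for $b$) with the same initial condition, hence $b(d,r;t)=m(d,r;t)$ and therefore $\sum_t b(d,r;t)^2=\dim_K E_r(d,K)$. This equality of cardinality with dimension forces the spanning set $\{eDe\}$ to be a basis. Cellularity is then checked by hand from the axioms, using the factorisation of each admissible $D$ as $D_1\circ D_2^*$ with $D_i$ monic; the point is that $C(D_1,D_2):=eD_1D_2^*e$ gives the cell datum, and axiom (C3) follows because $D_2^*eD_1'$ is a morphism $t\to s$ in the Temperley--Lieb category whose monic part has coefficient independent of the outer factors. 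You should either carry out this dimension count or give a correct argument for why the corner inherits cellularity with the stated basis.
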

Note that the planar diagrams are labelled by the set $\Sym_{rd}^c$ of fully commutative elements in
$\Sym_{rd}$; the requirement in the theorem is equivalent to
taking those $w\in\Sym_{rd}^c$ such that $L(w),R(w)\subseteq \{d,2d,\dots,(r-1)d\}$
(cf. \cite{FG}).

We shall give further details of the cellular structure below, both in terms of diagrams,
and in terms of pairs of standard tableaux.

\section{The case $d=1$.} 
\subsection{Temperley-Lieb action}
It is known (cf., e.g. \cite[\S 3.4]{LZ2}) that in this case,
the $R$-matrix acts on $\Delta_K(1)^{\ot 2}$ with eigenvalues $q^{\frac{1}{2}}$ and 
$-q^{\frac{3}{2}}$.
If we adjust the map $\eta$ of \eqref{eq:eta} by sending the generators to 
$T_i:=q^{\frac{1}{2}}R_i$,
where $R_i$ is the relevant $R$-matrix, then $\eta$ factors through the algebra
$H_r(A):=AB_r/\lef(T_i+q\inv)(T_i-q)\ri$, which is well known to be the Hecke algebra,
and has $A$-basis $\{T_w\mid w\in\Sym_r\}$. We therefore have, after tensoring with $\tA$,

\be\label{eq:mu}
\mu: H_r(\tA)\lr \End_{\U_{\tA}}(\Delta_{\tA}(1)^{\ot r})=E_r(1,\tA).
\ee

Moreover it is a special case of the main result of \cite{DPS99} (see also \cite{ALZ})
that $\mu$ is surjective for any choice of $\tA$, 
even when $\tA$ is taken to be $A$. Further, the arguments in \cite[Th. 3.5]{LZ2} generalised to the integral case show that the kernel
of $\mu$ is the ideal generated by the element $a_3:=\sum_{w\in \Sym_3}(-q)^{-\ell(w)}T_w$.
It follows that for any $\tA$, we have an isomorphism

\be\label{eq:nu}
\eta: H_r(\tA)/\lef a_3\ri\cong TL_r(\tA)\overset{\sim}{\lr}
\End_{\U_{\tA}}(\Delta_{\tA}(1)^{\ot r})=E_r(1,\tA),
\ee
where $TL_r(\tA):=H_r(\tA)/\lef a_3\ri$ is the $r$-string Temperley-Lieb algebra.
The generator $f_i$ acts as $q-T_i$ on $\Delta_{\tA}(1)^{\ot r}$. It is easily
shown that $f_i^2=(q+q\inv)f_i$, and that the other Temperley-Lieb relations are satisfied.

\subsection{Projection to $\Delta_{\tA}(d)$} Now it is elementary that
\be\label{eq:dd}
\Delta_K(1)^{\ot d}\cong \Delta_K(d)\oplus \Delta',
\ee
where $\Delta'$ is the direct sum of simple modules $\Delta_K(i)$ with $i<d$.
We therefore have a canonical projection $p_d:\Delta_K(1)^{\ot d}\lr\Delta_K(d)$,
which may be considered an element of $E_r(d,K)=\End_{\U_K}(\Delta_K(1)^{\ot d})$.

\begin{lem}\label{lem:pd} The projection $p_d$ is the image under $\mu$ (see \eqref{eq:mu})
of the element $e_d:=P_d(q)\inv\sum_{w\in\Sym_d}q^{\ell(w)}T_w \in H_d(\wt A)$, where
$P_d(q)=q^{\frac{d(d-1)}{2}}[d]!$.
\end{lem}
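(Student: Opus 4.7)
The plan is to recognize $e_d$ as the central primitive idempotent of the Hecke algebra $H_d(K)$ corresponding to the one-dimensional representation $T_i \mapsto q$, and then to invoke Schur--Weyl duality on $\Delta_K(1)^{\ot d}$ to identify its image under $\mu$ with $p_d$. First I would establish the standard symmetrizer identity $T_i E = E T_i = q E$ for every simple generator $T_i$, where $E := \sum_{w \in \Sym_d} q^{\ell(w)} T_w$. Splitting the sum over $\Sym_d$ into pairs $\{w, s_i w\}$ and applying the quadratic relation $T_i^2 = (q - q\inv) T_i + 1$ yields this after noting $1 + q(q - q\inv) = q^2$. Induction on reduced expressions then gives $T_w E = q^{\ell(w)} E$ for every $w$, so
$$
E^2 \;=\; \sum_{w \in \Sym_d} q^{2 \ell(w)} \, E \;=\; P_d(q) \, E,
$$
using the Poincar\'e-polynomial identity $\sum_w q^{2\ell(w)} = q^{d(d-1)/2}[d]!$. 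Hence $e_d = P_d(q)\inv E$ is a central idempotent in $H_d(K)$ with $e_d H_d(K) e_d = K e_d$, so $e_d$ is the central primitive idempotent attached to the trivial Hecke representation ($T_i \mapsto q$).

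Next, I would check that $\mu(e_d)$ restricts to the identity on $\Delta_K(d)$. Let $v \in \Delta_K(1)$ be a highest weight vector; since $\Delta_K(d)$ occurs with multiplicity one in \eqref{eq:dd}, $v^{\ot d}$ generates this unique summand. Because $v \ot v$ spans the highest-weight line of the $\Delta_K(2)$-summand of $\Delta_K(1)^{\ot 2}$, on which $T_i$ acts by its eigenvalue $q$, one has $T_i v^{\ot d} = q v^{\ot d}$ for every $i$. Therefore $\mu(E) v^{\ot d} = P_d(q) v^{\ot d}$, so $\mu(e_d) v^{\ot d} = v^{\ot d}$, and by $\U_K$-equivariance $\mu(e_d)$ is the identity on all of $\Delta_K(d) = \U_K \cdot v^{\ot d}$.

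Finally, I would invoke Schur--Weyl duality for the pair $(\U_K, H_d(K))$ on $\Delta_K(1)^{\ot d}$ to write $\Delta_K(1)^{\ot d} = \bigoplus_j \Delta_K(j) \ot S_j$ as a $(\U_K,H_d(K))$-bimodule, with pairwise non-isomorphic irreducible $H_d(K)$-modules $S_j$. Since $\Delta_K(d)$ has multiplicity one, the factor $S_d$ is one-dimensional, and by the previous paragraph it is the trivial Hecke representation. Because $e_d$ is the central primitive idempotent for this representation, $\mu(e_d)$ acts as the identity on $\Delta_K(d) \ot S_d = \Delta_K(d)$ and as zero on every other bimodule summand, whence $\mu(e_d) = p_d$. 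The main point requiring care is this final identification of $S_d$ with the trivial representation; everything else is either a routine Hecke-algebra manipulation or a direct computation on the highest weight vector.
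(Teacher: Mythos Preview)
Your proof is correct and rests on the same underlying idea as the paper's: $e_d$ is the central primitive idempotent of $H_d(K)$ attached to the one-dimensional representation $T_i\mapsto q$, and $\Delta_K(d)$ is precisely the isotypic component of $\Delta_K(1)^{\ot d}$ on which $H_d(K)$ acts through that representation.

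The execution, however, runs in the opposite direction from the paper's. You work \emph{forward} from $e_d$: you first verify the symmetriser identity $T_iE=qE$ and the idempotent relation $E^2=P_d(q)E$ purely inside $H_d(K)$, then check $\mu(e_d)v^{\ot d}=v^{\ot d}$ on the highest weight line, and finally invoke the full Schur--Weyl bimodule decomposition to conclude that $\mu(e_d)$ kills the complement $\Delta'$. The paper instead works \emph{backward} from $p_d$: it shows directly, via the decomposition $\Delta_K(1)^{\ot 2}\cong\Delta_K(0)\oplus\Delta_K(2)$ of the first two tensor factors, that $T_ip_d=qp_d$ as endomorphisms, so that $f_ip_d=p_df_i=0$; this characterises $p_d$ as the Jones idempotent of $\TL_d(K)$, which is then identified with $\mu(e_d)$ by uniqueness of the Hecke idempotent affording $T_w\mapsto q^{\ell(w)}$. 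Your route is more self-contained algebraically (you derive the symmetriser properties from scratch) but uses the heavier input of Schur--Weyl duality; the paper's route avoids Schur--Weyl by appealing to the uniqueness of the Jones idempotent, at the cost of a short representation-theoretic argument on the tensor space. Either way the content is the same.
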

\begin{proof}
We begin by showing that for $i=1,\dots,d-1$, 
\be\label{eq:pd}
T_ip_d=p_dT_i=qp_d
\ee
as endomorphisms of $\Delta_K(1)^{\ot d}$.

By symmetry, it suffices to prove \eqref{eq:pd} for $i=1$. Now
$$
\begin{aligned}
\Delta_K(1)^{\ot d}=&\Delta_K(1)\ot\Delta_K(1)\ot\Delta_K(1)^{\ot (d-2)}\\
\cong& (\Delta_K(0)\oplus \Delta_K(2))\ot \Delta_K(1)^{\ot (d-2)}\\
\cong& (\Delta_K(0)\ot \Delta_K(1)^{\ot (d-2)})\oplus (\Delta_K(2)\ot \Delta_K(1)^{\ot (d-2)})\\
\end{aligned}
$$

But $p_d$ acts as zero on the first summand (since the highest occurring weight is $d-2$) and
$T_1$ acts as $q$ on the second summand. This proves the relation \eqref{eq:pd}.
Now since $f_i=\mu(q-T_i)$, this shows that $p_d$ is the ``Jones idempotent''
of $TL_d(K)$, defined by the relations $f_ip_d=p_df_i=0$ for all $i$. 

It follows that if $p_d'$ is the unique idempotent in $H_d(K)$ corresponding to the algebra
homomorphism $T_w\mapsto q^{\ell(w)}$, then $p_d=\mu(p_d')$. But this idempotent
is precisely the element $e_d$ in the statement.
\end{proof}

The next statement is immediate.

\begin{cor}\label{cor:pd}
Let $\tA=A[[d]!\inv]$. Then 
\be\label{eq:drd}
\Delta_{\tA}(1)^{\ot rd}\cong \Delta_{\tA}(d)^{\ot r}\oplus \Gamma,
\ee
where $\Gamma$ is a $\U_{\tA}$-submodule, and the corresponding
projection $p\in\End_{rd}(1,\tA)$ such that $p(\Delta_{\tA}(1)^{\ot rd})
=\Delta_{\tA}(d)^{\ot r}$ is given by $p=p_d^{\ot r}$ where we now consider 
$p_d$ as an element of $E_r(d, \tA) \subset E_r(d, K)$.
\end{cor}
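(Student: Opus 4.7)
The plan is to bootstrap directly from Lemma \ref{lem:pd} by a base change argument followed by tensoring. First, since $[d]!$ is invertible in $\tA=A[[d]!\inv]$, the element $e_d=P_d(q)\inv\sum_{w\in\Sym_d}q^{\ell(w)}T_w$ is a well-defined element of $H_d(\tA)$. Inspection of the defining relations (or base change from $K$, where it is the Jones idempotent of Lemma \ref{lem:pd}) shows that $e_d^2=e_d$, so $p_d:=\mu(e_d)\in E_d(1,\tA)$ is an idempotent $\U_{\tA}$-module endomorphism of $\Delta_{\tA}(1)^{\ot d}$.

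Next, I would identify the image of $p_d$ over $\tA$. Working over $K$, Lemma \ref{lem:pd} together with \eqref{eq:dd} identifies $p_d(\Delta_K(1)^{\ot d})=\Delta_K(d)$. Since $\Delta_{\tA}(d)$ is the $\U_{\tA}$-stable integral form inside $\Delta_K(d)$ and $p_d$ is defined over $\tA$, the image $p_d(\Delta_{\tA}(1)^{\ot d})$ is a $\U_{\tA}$-submodule of $\Delta_K(d)$ which becomes $\Delta_K(d)$ after tensoring with $K$; standard arguments (the image is generated by a highest weight vector of weight $d$ and is $\U_{\tA}$-stable) identify it with $\Delta_{\tA}(d)$. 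Setting $\Delta'_{\tA}:=(1-p_d)(\Delta_{\tA}(1)^{\ot d})$, idempotency yields the $\U_{\tA}$-module decomposition
\[
\Delta_{\tA}(1)^{\ot d}\cong \Delta_{\tA}(d)\oplus \Delta'_{\tA}.
\]

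Finally, taking the $r$-fold tensor power of this decomposition distributes over direct sums and produces
\[
\Delta_{\tA}(1)^{\ot rd}\cong \Delta_{\tA}(d)^{\ot r}\oplus \Gamma,
\]
with $\Gamma$ the sum of all mixed tensor summands. The projection onto the first summand is by construction $p_d\ot p_d\ot \cdots \ot p_d=p_d^{\ot r}$, regarded as an element of $E_{rd}(1,\tA)$ via the natural inclusion $E_d(1,\tA)^{\ot r}\hookrightarrow E_{rd}(1,\tA)$. There is essentially no obstacle here beyond checking that $e_d$ is integrally defined over $\tA$ (which is the reason $[d]!$ must be inverted) and that the image of $p_d$ coincides with the integral form $\Delta_{\tA}(d)$; both are immediate.
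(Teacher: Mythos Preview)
Your proposal is correct and matches the paper's approach: the paper gives no proof at all, stating only that ``the next statement is immediate'' from Lemma~\ref{lem:pd}, and your argument is precisely the natural expansion of that claim---pass from $K$ to $\tA$ using the invertibility of $[d]!$, then take the $r$-fold tensor power. The only point you flag as needing care (that the image of $p_d$ over $\tA$ is exactly the integral Weyl module $\Delta_{\tA}(d)$) is also treated as obvious by the paper.
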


\section{Endomorphisms of $\Delta_{\tA}(d)^{\ot r}$.}
\subsection{Identification of $E_r(d,\tA)$}
Throughout this section we take $\tA$ to be $\tA=A[[d]!\inv]$.
Recall that $E_r(d,\tA)=\End_{\U_{\tA}}(\Delta_{\tA}(d)^{\ot r})$.
We are now in a position to identify $E_r(d,\tA)$ on the nose, as a subalgebra
of $\TL_{rd}(\tA)\cong \End_{\U_{\tA}}(\Delta_{\tA}(1)^{\ot rd})$.
This will lead to the identification of the cellular structure on
$E_r(d,\tA)$.

\begin{prop}\label{prop:ed}
There is an
isomorphism $E_r(d,\tA)\overset{\sim}{\lr}p\TL_{rd}(\tA)p$, where $p$ is the 
idempotent $p=p_d^{\ot r}$ of $\TL_{rd}(\tA)$ described above.
\end{prop}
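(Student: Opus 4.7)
The plan is to apply the standard idempotent identification of endomorphism algebras: whenever $M = N \oplus N'$ is a direct sum decomposition of a module over a ring $R$ and $e \in \End_R(M)$ is the projection onto $N$ along $N'$, the assignment sending $\phi \in \End_R(N)$ to its extension by zero on $N'$ gives an $R$-algebra isomorphism $\End_R(N) \overset{\sim}{\lr} e \End_R(M) e$. I would apply this with $R = \U_{\tA}$, $M = \Delta_{\tA}(1)^{\ot rd}$, $N = \Delta_{\tA}(d)^{\ot r}$, $N' = \Gamma$, and $e = p = p_d^{\ot r}$.

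Three ingredients are needed, and each has been prepared in Section~2. First, by \corref{cor:pd}, $p$ is an idempotent of $\End_{\U_{\tA}}(\Delta_{\tA}(1)^{\ot rd})$ with image $\Delta_{\tA}(d)^{\ot r}$ and $\U_{\tA}$-stable complement $\Gamma$. Second, by the isomorphism \eqnref{eq:nu} (stated for arbitrary $\tA$), one has $\End_{\U_{\tA}}(\Delta_{\tA}(1)^{\ot rd}) \cong \TL_{rd}(\tA)$. Third, the hypothesis that $[d]!$ is invertible in $\tA$ makes $P_d(q)\inv$ lie in $\tA$, so the element $e_d$ of \lemref{lem:pd} belongs to $H_d(\tA)$; consequently $p_d \in \TL_d(\tA)$, and thus $p = p_d^{\ot r} \in \TL_{rd}(\tA)$. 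Combining these three facts yields the desired isomorphism $E_r(d,\tA) \overset{\sim}{\lr} p\,\TL_{rd}(\tA)\,p$.

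The only point requiring attention is the passage from $K$ to the integral ring $\tA$: one must check that the inverse map (extension by zero on $\Gamma$) produces an element of $\TL_{rd}(\tA)$ rather than only of $\TL_{rd}(K)$. This holds because both the decomposition of \corref{cor:pd} and the identification \eqnref{eq:nu} take place over $\tA$, so the block-matrix description of $\End_{\U_{\tA}}(\Delta_{\tA}(d)^{\ot r} \oplus \Gamma)$ already sits inside $\TL_{rd}(\tA)$, and the distinguished diagonal block is exactly $p\,\TL_{rd}(\tA)\,p$. Consequently the proof is essentially formal, with no substantive obstacle beyond the invertibility hypothesis, which was built into the definition of $\tA$ precisely to make $p_d$ integral.
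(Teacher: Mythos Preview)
Your proposal is correct and follows essentially the same approach as the paper: the paper also defines the map $\alpha\mapsto\wt\alpha$ by extending $\alpha$ by zero on $\Gamma$ via the decomposition of \corref{cor:pd}, and identifies its image in $E_{rd}(1,\tA)\cong\TL_{rd}(\tA)$ as $p\TL_{rd}(\tA)p$. Your write-up is somewhat more explicit about the abstract idempotent framework and the integrality check, but the argument is the same.
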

\begin{proof}
For any endomorphism $\alpha\in E_r(d,\tA)$ we obtain an endomorphism $\wt\alpha$ of
$\Delta_{\tA}(1)^{\ot rd}$ by extending $\alpha$ by zero, using the decomposition
\eqref{eq:drd}, that is, by defining $\wt \alpha$ to be zero on $\Gamma$.
The map $\alpha\mapsto\wt\alpha$ is an inclusion $E_r(d,\tA)\hookrightarrow E_{rd}(1,\tA)$,
and its image is clearly the space of endomorphisms
$\beta\in E_{rd}(1,\tA)$ such that $\ker(\beta)\supseteq \Gamma$ and 
$\im(\beta)\subset \Delta_{\tA}(d)^{\ot r}$ (as in the decomposition \eqref{eq:drd}).
This image is $p\TL_{rd}(\tA)p$.
\end{proof}

\subsection{Temperley-Lieb diagrams}\label{ss:tl}
The key step in proving cellularity is the identification of a 
certain $\tA$-basis of $p\TL_{rd}(\tA)p$. This will be done in terms of certain diagrams.
The Temperley-Lieb algebra $\TL_{rd}(\tA)$ has $\tA$ basis
consisting of planar diagrams from $rd$ to $rd$, in the language of  \cite{GL98}.
These diagrams are in bijection with the set $\Sym_{rd}^c$ of fully commutative elements \cite{FG}
of $Sym_{rd}$, which in turn is in bijection with those elements of $\Sym_{rd}$ which correspond,
under the Robinson-Schensted correspondence, to pairs of standard tableaux with two rows.

\begin{center}
\begin{tikzpicture}[scale=1.5]

\foreach \x in {1,2,3,4,5,6}
\filldraw(\x,0) circle (0.05cm);
\foreach \x in {1,2,3,4,5,6}
\filldraw(\x,2) circle (0.05cm);
\draw (1,0)--(5,2);
\draw (4,0)--(6,2);

\draw node[above] at (1,2){1};\draw node[above] at (2,2){2};\draw node[above] at (3,2){3};
\draw node[above] at (4,2){4};\draw node[above] at (5,2){5};\draw node[above] at (6,2){6};

\draw node[below] at (1,0){1};\draw node[below] at (2,0){2};\draw node[below] at (3,0){3};
\draw node[below] at (4,0){4};\draw node[below] at (5,0){5};\draw node[below] at (6,0){6};

\draw(1,2) .. controls (2,1.2) and (3,1.2) .. (4,2);
\draw(2,2) .. controls (2.2,1.7) and (2.8,1.7) .. (3,2);

\draw(2,0) .. controls (2.2,0.3) and (2.8,0.3) .. (3,0);
\draw(5,0) .. controls (5.2,0.3) and (5.8,0.3) .. (6,0);


\end{tikzpicture}
\centerline{Figure 1}
\end{center}
\label{fig2}

We shall describe now how to obtain a pair $(S(D),R(D))$ of standard tableaux directly from 
a planar diagram $D$. We use the planar diagram from $6$ to $6$
in Figure 1 to illustrate the description.

Each planar diagram from $N$ to $N$ consists of a set
of $N$ non-intersecting arcs. These may be through-arcs, joining an upper node to a lower node,
or upper (top to top) or lower (bottom to bottom). The latter two are referred to as horizontal arcs.
The diagrams are multiplied in the usual way, by concatenation, with each closed circle being
replaced by $[2]=q+q\inv$. The generator $f_i$ corresponds to the diagram in Figure 2. Note that 
if there are $t$ through arcs, then there are equally many top arcs and bottom arcs, and if this 
number is $k$, then $t+2k=N$.

\begin{center}
\begin{tikzpicture}[scale=1.5]

\foreach \x in {1,3,4,5,6,8}
\filldraw(\x,0) circle (0.05cm);
\foreach \x in {1,3,4,5,6,8}
\filldraw(\x,2) circle (0.05cm);
\draw (1,0)--(1,2);
\draw (3,0)--(3,2);
\draw (6,0)--(6,2);
\draw (8,0)--(8,2);

\draw node[above] at (1,2){1};\draw node[above] at (4,2){i};\draw node[above] at (5,2){i+1};
\draw node[above] at (8,2){N};

\draw node[below] at (1,0){1};\draw node[below] at (4,0){i};\draw node[below] at (5,0){i+1};
\draw node[below] at (8,0){N};

\draw(4,2) .. controls (4.2,1.7) and (4.8,1.7) .. (5,2);

\draw(4,0) .. controls (4.2,0.3) and (4.8,0.3) .. (5,0);

\draw(2,2) node {\large{$\cdots$}};
\draw(7,2) node {\large{$\cdots$}};
\draw(2,0) node {\large{$\cdots$}};
\draw(7,0) node {\large{$\cdots$}};

\end{tikzpicture}
\centerline{Figure 2}
\end{center}
\label{fig3}

Now to each such planar diagram $D$, we associate an ordered pair $(S(D),T(D))$ of standard 
tableaux
with two rows, as follows. Let $i_1,\dots,i_k$ be the right nodes of the upper  arcs
written in ascending order. Then $S(D)$ has second row $i_1,\dots,i_k$, and first row
the complement of $\{i_1,\dots,i_k\}$, written in ascending order. Note that the first 
row has $t+k\geq k$ elements. The tableau $T(D)$ is defined similarly, using the sequence
$j_1,\dots,j_k$ of right ends of the lower arcs. Note that both $S(D)$ and $T(D)$ correspond to
the partition $(t+k,k)$, and hence the diagram corresponds via the Robinson-Schensted
correspondence to an element $w(D)\in\Sym_N$, which is fully commutative.

Say that a horizontal arc is {\em small} if its vertices are $i,i+1$ for some $i$. 

\begin{definition}\label{def:lrsets}The {\em left set}
$L(D)$ of a planar diagram $D$ is the set of left vertices of the small upper arcs of $D$.
Similarly, the
right set $R(D)$ is the set of left vertices of the small lower arcs of $D$. 
\end{definition}

Note that in the notation from Section 1.1 we have
$L(D)= L(w(D))$, and similarly $R(D)= R(w(D))$. 

For the diagram $D$ in Figure 1, $L(D)=\{2\}$, while $R(D)=\{2,5\}$. The tableaux $S(D)$ and $T(D)$
are given by
$$
S(D)=
\begin{tabular}{cccc}  
1&2&5&6\\
3&4&&\\
\end{tabular}
,\;\;\;
T(D)=
\begin{tabular}{cccc}  
1&2&4&5\\
3&6&&\\
\end{tabular} 
$$

Note that if $\cD(S):=
\{i\mid i+1\text{ is in a lower row than }i\}$ is the descent set of a standard tableau
$S$, then $L(D)=\cD(S(D))$ and $R(D)=\cD(T(D))$.

\section{Proof of the main theorem.} 

In this section we prove Theorem \ref{thm:cell}, and give some of its consequences. We keep the convention
$\tA = A[([d]!)\inv]$ from Section 3.

\subsection{A key Lemma} We begin by proving the following key result.

\begin{lem}\label{lem:basis} The $\tA$-algebra
$p\TL_{dr}(\tA)p$ has $\tA$-basis given by the set of elements $pDp$ where $D$ is a diagram in 
$\TL_{dr}(\tA)$ such that $L(D)\cup R(D)\subseteq\{d,2d,\dots,(r-1)d\}$.
\end{lem}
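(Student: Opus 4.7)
The proof separates cleanly into a spanning step and a linear independence step.

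\textbf{Spanning.} Since the planar diagrams form an $\tA$-basis of $\TL_{rd}(\tA)$, the products $pDp$ over all diagrams $D$ span $p\TL_{rd}(\tA)p$. I will show $pDp=0$ whenever $L(D)\cup R(D)\not\subseteq\{d,2d,\dots,(r-1)d\}$. Suppose $i\in L(D)$ is not a multiple of $d$; then $D$ has a small upper arc at $(i,i+1)$, and a direct diagrammatic calculation (stacking $f_i$ on top of $D$ produces exactly one closed loop, with all other strands left unchanged) gives $f_iD=[2]D$. Because $i\in\{(k-1)d+1,\dots,kd-1\}$ for some $k$, the element $f_i$ is the corresponding generator of the $k$th tensor factor of $\TL_d(\tA)^{\ot r}\subseteq\TL_{rd}(\tA)$, and the Jones--Wenzl relation $p_df_i=0$ established in the proof of \lemref{lem:pd} yields $pf_i=0$ in $\TL_{rd}(\tA)$. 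Since $[2]$ divides $[d]!$ for $d\geq 2$ (the case $d=1$ is trivial: $p=1$ and $\{d,2d,\dots,(r-1)d\}=\{1,\dots,r-1\}$ already contains every possible small-arc endpoint), $[2]$ is invertible in $\tA$, and $pD=[2]\inv pf_iD=0$. A symmetric argument using $R(D)$ and small lower arcs shows $Dp=0$.

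\textbf{Linear independence.} By \propref{prop:ed}, $p\TL_{rd}(\tA)p\cong E_r(d,\tA)$ is an $\tA$-submodule of the free module $\TL_{rd}(\tA)$, hence torsion-free, and extension of scalars embeds it in $p\TL_{rd}(K)p=E_r(d,K)$, where $K=\bQ(q^{\frac{1}{2}})$. It thus suffices to verify that the spanning set has $K$-cardinality $\dim_K E_r(d,K)$. Semisimplicity of $\U_K$ at generic $q$ gives $E_r(d,K)=\bigoplus_\lambda M_{m_\lambda}(K)$, where $m_\lambda$ is the multiplicity of $\Delta_K(\lambda)$ in $\Delta_K(d)^{\ot r}$, so $\dim_K E_r(d,K)=\sum_\lambda m_\lambda^2$. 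Iterating the $\fsl_2$-branching rule $\Delta(\mu)\ot\Delta(d)=\bigoplus_{a=0}^{\min(\mu,d)}\Delta(\mu+d-2a)$ realises $m_\lambda$ as the number of length-$r$ paths in the $\Delta(d)$-Bratteli diagram from $0$ to $\lambda$. I will biject these paths with 2-row standard tableaux of total size $rd$ ending at weight $\lambda$ whose $\pm$-encoding has each length-$d$ block of the form $+^a-^{d-a}$ --- equivalently, tableaux satisfying $\cD(S)\subseteq\{d,2d,\dots,(r-1)d\}$. Then Robinson--Schensted combined with the identifications $L(D)=\cD(S(D))$ and $R(D)=\cD(T(D))$ from \S\ref{ss:tl} identifies $\sum_\lambda m_\lambda^2$ with the number of diagrams $D$ enumerated in the statement of the lemma.

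The main obstacle will be the combinatorial identification of $m_\lambda$ with the restricted-descent tableau count. The key observation is that the descent restriction forces each length-$d$ block of the $\pm$-sequence to be monotone of the form $+^a-^{d-a}$, encoding a single step of the $\Delta(d)$-branching exactly, while the standardness of $S$ translates to the non-negativity constraint along the Bratteli path. Once the two enumerations are matched, the spanning set of Step 1 becomes a basis by a simple rank comparison.
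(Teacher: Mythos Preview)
Your argument is correct in outline and close in spirit to the paper's, but there is one combinatorial slip that needs repair.

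The spanning step is essentially the paper's: it too observes that $p_df_i=f_ip_d=0$ kills any diagram with a small arc at a non-multiple of $d$. You make explicit the intermediate relation $f_iD=[2]D$ and the invertibility of $[2]$ in $\tA$, which the paper leaves implicit.

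For linear independence, both you and the paper reduce to showing that $|\CB(d,r)|=\dim_K E_r(d,K)$, and both write each side as a sum of squares indexed by the number $t$ of through-strings. The paper then proves $b(d,r;t)=m(d,r;t)$ by induction on $r$: the multiplicities satisfy the Clebsch--Gordan recurrence, and the diagram counts satisfy the same recurrence via an explicit construction that appends $d$ new top vertices and hooks $i$ of them back to existing through-strings. Your route through Robinson--Schensted and Bratteli paths is the same argument unrolled: the paper's inductive step \emph{is} one step of your Bratteli walk, and its monic-diagram decomposition corresponds to one tableau of your pair. So the two approaches differ only in presentation, recursion versus direct bijection.

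The slip: with the convention $+$ for row~$1$ and $-$ for row~$2$, a descent at $i$ is a $+$ immediately followed by a $-$. Hence the condition $\cD(S)\subseteq\{d,2d,\dots,(r-1)d\}$ forces each length-$d$ block to have the form $-^{d-a}+^{a}$, not $+^a-^{d-a}$ as you wrote. With the corrected block form, standardness at the bottom of each block gives $d-a\leq\mu_{j-1}$, which is exactly the Clebsch--Gordan bound $0\leq d-a\leq\min(d,\mu_{j-1})$ in the branching rule --- somewhat stronger than the bare non-negativity $\mu_j\geq 0$ you mention. With these two small corrections your bijection goes through and the proof is complete.
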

\begin{proof}
The $\tA$-algebra $E_r(d,\tA)\cong
p\TL_{rd}(\tA)p$ is evidently spanned by the elements $pDp$ where $D$ ranges over
the planar diagrams $:rd\to rd$. But for $i=1,\dots,d-1$, we have seen that 
$p_df_i=f_ip_d=0$. It follows that $pDp=0$ unless $L(D)$ and $R(D)$ are both contained
in $\{d,2d,\dots,(r-1)d\}$. Let $\CB(d,r)$ be the set of planar diagrams satisfying these 
conditions. By the above remarks, it will suffice to show that 
\be\label{eq:li}
\{pDp\mid D\in\CB(d,r)\}\text{ is linearly independent.} 
\ee

To prove \eqref{eq:li} it suffices to work over the field $K$; in particular
we are reduced to showing that 

\be\label{eq:dims}
|\CB(d,r)|=\dim_K\left(\End_{\U_K}(\Delta_K(d)^{\ot r}\right).
\ee

We shall prove \eqref{eq:dims} essentially by showing that both sides of \eqref{eq:dims}
satisfy the same recurrence. Let us begin with the left side.

Observe that if a diagram $D\in \CB(d,r)$ has $t$ through arcs, it may be thought of as a 
pair of diagrams $D_1,D_2$, where the $D_i$ are monic diagrams $:t\to rd$. Recall that a
diagram from $t$ to $N$ ($t\leq N$) is monic if it has $t$ through arcs. One thinks of 
$D_1$ as the top half of $D$, and $D_2$ as the $^*$ of the bottom half of $D$,
where $^*$ is the cellular involution on the Temperley Lieb category, which
reflects diagrams in a horizontal line.
It follows that if we write $|\CB(d,r)|=b(d,r)$ and $|\CB(d,r;t)|=b(d,r;t)$, where 
$\CB(d,r;t)$ is the set of monic planar diagrams $D:t\to rd$ such that $L(D)\subseteq
\{d,2d,\dots,(r-1)d\}$, then 

\be\label{eq:dss}
b(d,r)=\sum_{0\leq t\leq dr}b(d,r;t)^2.
\ee

Now consider the right side of \eqref{eq:dims}. Define the positive integers 
$m(d,r;t)$ by
\be\label{eq:defm}
\Delta_K(d)^{\ot r}\cong \oplus_{t=0}^{dr}m(d,r;t)\Delta_K(t).
\ee

Thus the $m(d,r;t)$ are multiplicities, and $m(d,r;t)=0$ unless $t\equiv rd(\text{(mod $2$)}$.
Moreover, evidently, we have, if $m(d,r):=\dim_K\left(\End_{\U_K}(\Delta_K(d)^{\ot r}\right)$,
\be\label{eq:mss}
m(d,r)=\sum_{0\leq t\leq dr}m(d,r;t)^2.
\ee

It is clear that in view of \eqref{eq:dss} and \eqref{eq:mss}, the Theorem will follow if we
prove that for all $d,r$ and $t$,

\be\label{eq:fin}
m(d,r;t)=b(d,r;t).
\ee

We shall prove \eqref{eq:fin} by induction on $r$. If $r=1$, then
\be\label{eq:case1}
m(d,1;t)=b(d,1;t)=
\begin{cases}
0\text{ if }t\neq d\\
1\text{ if }t= d.\\
\end{cases}
\ee

Now by the Clebsch-Gordan formula, we have, for any integer $n$, 
$$\Delta_K(d)\ot\Delta_K(n)\cong \Delta_K(d+n)\oplus\Delta_K(d+n-2)\oplus\dots
\oplus\Delta_K(|d-n|).$$

It follows that
\be\label{eq:mrec}
m(d,r+1;t)=\sum_{s=t-d}^{t+d}m(d,r;s),
\ee
where $m(d,r;s)=0$ if $s<0$ or if $s>dr$.

We shall complete the proof of the Lemma by showing that the numbers $b(d,r;t)$ satisfy
a recurrence analogous to \eqref{eq:mrec}. For this observe that any diagram $D\in\CB(d,r;k)$
gives rise to a unique diagram in $\CB(d,r+1;k+d-2i)$,  for $0\leq i\leq \min\{d,k\}$, 
as depicted in Figure 3, and each diagram  $D' \in \CB(d,r+1;t)$ arises in this way from a unique
diagram in $\CB(d,r;k)$ for a uniquely determined $k$.  In fact, $k = t-d+2i$ where §i§ is the number of arcs in $D'$ whose
right vertices belong to $\{dr+1, \cdots, d(r+1)\}$. It follows that 

\be\label{eq:brec}
b(d,r+1;t)=\sum_{s=t-d}^{t+d}b(d,r;s),
\ee
where $b(d,r;s)=0$ if $s<0$ or if $s>dr$.

\begin{center}
\begin{tikzpicture}[scale=1.2]


\foreach \x in {1,7,8,10,11,13}
\filldraw(\x,2) circle (0.05cm);

\draw (1,2)--(7,2);
\draw (1,1.5)--(7,1.5);
\draw (1,2)--(1,1.5);
\draw (7,2)--(7,1.5);
\draw (1.5,1.5)--(1.5,0);
\draw (3.5,1.5)--(3.5,0);
\draw (11,2)--(11,0);
\draw (13,2)--(13,0);

\draw node[above] at (1,2){\tiny 1};
\draw node[above] at (7,2){\tiny{dr}};
\draw node[above] at (8,2){\tiny dr+1};\draw node[above] at (10,2){\tiny{dr+i}};\draw node[above] at (11,2){\tiny{dr+i+1}};\draw node[above] at (13,2){\tiny{d(r+1)}};


\draw(8,2) .. controls (8,0) and (6.5,0) .. (6.5,1.5);
\draw(10,2) .. controls (10,-1)  and (4.5,-1) .. (4.5,1.5);


\draw(2.5,0.7) node {\huge{$\cdots$}};
\draw(2.5,1) node {\small{k-i}};
\draw(12,0.7) node {\huge{$\cdots$}};
\draw(12,1) node {\small{d-i}};
\draw(5.8,0.7) node {\huge{$\cdots$}};
\draw(5.8,1) node {\small{i}};
\draw(8.6,0.9) node {\huge{$\cdots$}};
\draw(8.6,1.2) node {\small{i}};
\draw(4,1.75) node {\small{D}};
\end{tikzpicture}
\centerline{Figure 3}
\end{center}
\label{fig4}

Comparing \eqref{eq:mrec} with \eqref{eq:brec}, and taking into account \eqref{eq:case1}, it follows that
$m(d,r;k)=b(d,r;k)$ for all $d,r$ and $k$. This completes the proof of \eqref{eq:fin} above, and hence of the Lemma.
\end{proof}

\subsection{Cellular structure}\label{ss:cell} We shall now complete the 
\begin{proof}[Proof of Theorem \ref{thm:cell}.] We have seen that $E_r(d,\tA)\cong p\TL_{rd}(\tA)p$,
and that the latter algebra has the basis $\CB(d,r)$, as stated in the theorem. It 
remains only to show that $p\TL_{rd}(\tA)p$ has a cellular structure.
Following \cite[Def. (1.1)]{GL96} we need to produce a cell datum $(\Lambda,M,C,^*)$ for
$p\TL_{rd}(\tA)p$. 

Take $\Lambda$ to be the poset $\{t\in\Z\mid 0\leq t\leq dr\text{ and }dr-t\in 2\Z\}$,
ordered as integers. For $t\in\Lambda$, let $M(t):=\CB(d,r;t)$, the set of monic planar
diagrams $D:t\to dr$ such that $L(D)\subseteq\{d,2d,\dots,(r-1)d\}$ 
(see \S \ref{ss:tl} and the proof of Lemma \ref{lem:basis}). Then the map 
$C:\amalg_{t\in\Lambda}M(t)\times M(t)\lr p\TL_{rd}(\tA)p$ is defined by
$C(D_1,D_2)=pD_1\circ D_2^*p$. Since each diagram $D\in\CB(r,d)$ is expressible
uniquely as $D=D_1\circ D_2^*$ for some $t\in\Lambda$ and $D_1,D_2\in M(t)$, 
it follows from Lemma \ref{lem:basis} that $C$ is a bijection from 
$\amalg_{t\in\Lambda}M(t)\times M(t)$ to a basis of $p\TL_{rd}(\tA)p$.
Finally, the anti-involution $^*$ is the restriction to $p\TL_{rd}(\tA)p$ of
the anti-involution on $\TL_{dr}(\tA)$, viz. reflection in a horizontal line.
Since $p^*=p$, we have $C(D_1,D_2)^*=(pD_1D_2^*p)^*=pD_2D_1^*p=C(D_2,D_1)$.

If $S,T\in M(t)$, we shall write $C(S,T)=C^t_{S,T}$, and for this proof only,
write $\CA=p\TL_{rd}(\tA)p$, and $\CA(<i)=\sum_{j<i,\; S,T\in M(j)}\tA C^j_{S,T}$.

It now remains only to prove the axiom (C3) of \cite[Def. (1.1)]{GL96}.
For this, let $S_1,S_2\in M(s)$ and $T_1,T_2\in M(t)$. Then 
\be\label{eq:cellax}
C^s_{S_1,S_2}C^t_{T_1,T_2}=pS_1(S_2^*pT_1)T_2^*p,
\ee
so that if $s<t$, the left side is in $\CA(<t)$, and there is nothing to prove.
Hence we take $s\geq t$.

Now $S_2^*pT_1$ is a morphism from $t$ to $s$, and hence is an $\tA$-linear combination 
of planar diagrams $D$ from $t$ to $s$. Thus the left side of \eqref{eq:cellax}
is an $\tA$-linear combination of elements of the form $pS_1DT_2^*p$. If $D$ is not
monic, then $pS_1DT_2^*p\in\CA(<t)$; if $D$ is monic, then clearly $pS_1DT_2^*p=
pS'T_2^*p$ for some monic $S':t\to dr$.

It follows from \eqref{eq:cellax} that modulo $\CA(<t)$, 
$C^s_{S_1,S_2}C^t_{T_1,T_2}=\sum_{S\in\CB(d,r;t)}a(S)C^t_{S,T_2}$, and 
$a(S)$ is independent of $T_2$. This proves the axiom (C3), and hence the cellularity of
$\CA$. The proof of Theorem \ref{thm:cell} is now complete.
\end{proof}

\section{Endomorphism algebras and specialisation.}
We shall prove in this section results showing how the multiplicities of the indecomposable summands of the specialisations of $\Delta_A(d)^{\otimes r}$ corresponding to homomorphisms $A\to k$ where $k$ is a field, relate to the dimensions of the simple modules for the corresponding endomorphism rings. It turns out that this is a consequence of a result on tilting modules which is valid for general quantum groups. Therefore in the first two subsections we deal with this general situation. Then in the last subsection we deduce the explicit consequences in our $\fsl_2$-case where we take advantage of our cellularity result from Section 4 on the endomorphism rings.

\subsection{Integral endomorphism algebras and specialisation} 
We now provide some rather general base change results for Hom-spaces between certain representations of quantum groups. So in this subsection we shall work with a general quantum group $U_q$ over $K$ with integral form $U_A$. We denote by $k$ an arbitrary field (in this subsection $k$ may even be any commutative noetherian $A$-algebra) made into an $A$-algebra by specializing $q$ to $\zeta \in k\setminus\{0\}$ and set $U_\zeta = U_A \otimes_A k$.  When $M$ is a $U_A$-module we write $M_q$, respectively $M_\zeta$ for the corresponding $U_q$ and $U_\zeta$-modules.

For each dominant weight $\lambda$ we write
$\Delta_q(\lambda), \Delta_A(\lambda)$ and $\Delta_\zeta(\lambda)$ for the Weyl modules
for $\U_q$, $\U_A$ and $\U_\zeta$ respectively. Similarly, we have the dual
Weyl modules $\nabla_q(\lambda), \nabla_A(\lambda)$ and $\nabla_\zeta(\lambda)$
respectively. Then it is well known that, writing $w_0$ for the longest 
element of the Weyl group,
$$
\nabla_\zeta(\lambda)=\Delta_\zeta(-w_0\lambda)^*,
$$
and similarly for $\nabla_A(\lambda)$ and $\nabla_q(\lambda)$.

We shall make repeated use of the following result. For any two weights $\lambda,\mu\in X$, we have
\be\label{kempf}
\Ext_{\U_A}^i(\Delta_A(\lambda),\nabla_A(\mu))=
\begin{cases}
A\text{ if }\lambda=\mu\text{ and }i=0\\
0\text{ otherwise}.\\
\end{cases}
\ee
This is proved exactly as in the corresponding classical case, see e.g. \cite{Ja}, II.B.4 by invoking the
quantised Kempf vanishing theorem proved in general in \cite{SRH}.
\begin{lem}\label{lemma-kempf}
Let $M,N$ be $\U_A$-modules which are finitely generated as $A$-modules.
If $M$ has a filtration by $\Delta_A(\lambda)$'s and $N$ has a filtration by
$\nabla_A(\mu)$'s, then $\Hom_{\U_A}(M,N)$ is a free $A$-module of rank
equal to $\dim_{\bQ(q)}\Hom_{\U_q}(M_q,N_q)$. Further, we have 
$$
\Hom_{\U_\zeta}(M_\zeta,N_\zeta)\simeq\Hom_{\U_A}(M_A,N_A)\otimes_A k.
$$
\end{lem}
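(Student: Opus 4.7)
The overall strategy is to prove, by a double induction on the lengths of the given $\Delta_A$- and $\nabla_A$-filtrations, a stronger statement that bundles the two claims together with the vanishing of higher $\Ext$'s. Concretely, I would aim to show that under the hypotheses,
\begin{enumerate}
\item[(a)] $\Ext^i_{\U_A}(M,N)=0$ for all $i\geq 1$,
\item[(b)] $\Hom_{\U_A}(M,N)$ is a free $A$-module,
\item[(c)] the canonical map $\Hom_{\U_A}(M,N)\otimes_A k\to \Hom_{\U_\zeta}(M_\zeta,N_\zeta)$ is an isomorphism (and the analogous $\Ext^i$'s vanish at $\zeta$).
\end{enumerate}
The rank assertion then follows by applying (c) to the flat specialisation $A\to K$.

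The base case is $M=\Delta_A(\lambda)$, $N=\nabla_A(\mu)$, where (a) and (b) are \eqref{kempf} verbatim. For (c) at the base, I would invoke \eqref{kempf} at $\U_\zeta$ (the quantised Kempf vanishing from \cite{SRH} is available there as well) to obtain $\Hom_{\U_\zeta}(\Delta_\zeta(\lambda),\nabla_\zeta(\mu))=k\cdot\delta_{\lambda,\mu}$, matching $A\cdot\delta_{\lambda,\mu}\otimes_A k$.

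For the inductive step on $N$, I would pick a subobject $N'\subset N$ in the given filtration with $N/N'\cong \nabla_A(\mu)$, and feed the short exact sequence $0\to N'\to N\to\nabla_A(\mu)\to 0$ into the long exact sequence of $\Ext^\bullet_{\U_A}(M,-)$. The inductive hypothesis on the shorter filtration together with the base case gives vanishing of $\Ext^i_{\U_A}(M,N)$ for $i\geq 1$ and a short exact sequence in $\Hom$; since its right term is $A$-free the sequence splits, which yields (a) and (b) for $N$. Because each $\nabla_A(\mu)$ is $A$-free (hence flat), the same short exact sequence stays exact after $-\otimes_A k$, producing the parallel short exact sequence at $\U_\zeta$, and the five-lemma comparison of the two short exact sequences (using (c) on $N'$ and $\nabla_A(\mu)$) delivers (c) for $N$. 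A symmetric outer induction on the length of the $\Delta_A$-filtration of $M$, using $0\to M'\to M\to \Delta_A(\lambda)\to 0$ and the long exact sequence of $\Ext^\bullet_{\U_A}(-,N)$, finishes the argument.

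The main obstacle is purely organisational: to make the induction go through one needs all three assertions (a), (b), (c) in play at every stage, since (c) requires both the $\U_A$- and $\U_\zeta$-level $\Ext^1$ vanishings to apply the five-lemma. The only genuinely non-formal ingredient is the availability of \eqref{kempf} over $\U_\zeta$ (i.e.\ that Kempf vanishing descends to the specialisation); once this is granted, the rest is standard homological bookkeeping powered by the $A$-flatness of $\Delta_A(\lambda)$ and $\nabla_A(\mu)$.
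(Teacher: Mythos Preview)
Your proof is correct, but the paper takes a different and somewhat slicker route. Rather than a double induction with the five-lemma, the paper invokes a single base-change spectral sequence
\[
E_2^{-p,q}=\Tor_p^{A}\bigl(\Ext_{\U_A}^q(M,N),\,k\bigr)\ \Longrightarrow\ \Ext_{\U_\zeta}^{\,q-p}(M_\zeta,N_\zeta).
\]
From \eqref{kempf} and an easy filtration induction (your (a) and (b)), one has $\Ext_{\U_A}^q(M,N)=0$ for $q>0$ and $\Hom_{\U_A}(M,N)$ free, so all $E_2$-terms with $(p,q)\neq(0,0)$ vanish and the spectral sequence degenerates immediately, yielding both the base-change isomorphism and the vanishing of $\Ext^i_{\U_\zeta}(M_\zeta,N_\zeta)$ for $i\geq 1$ as \emph{output}. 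By contrast, your approach needs the $\U_\zeta$-analogue of \eqref{kempf} as an \emph{input} to run the five-lemma at each inductive step (you flag this yourself). That input is indeed available, so nothing is wrong; but the spectral-sequence argument is more economical in that it only uses \eqref{kempf} over $A$. On the other hand, your approach is entirely elementary and makes the exact dependence on the filtration structure transparent, which may be preferable for readers unfamiliar with the relevant spectral sequence.
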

\begin{proof}
We have a spectral sequence with $E_2$-terms 
$$ E_2^{-p, q} = \Tor_p^{A}(\Ext_{U_A}^q(M,N), k)$$
converging to $\Ext_{U_\zeta}^{q-p}(M_\zeta, N_\zeta)$. By (\ref{kempf}) we have $E_2^{-p, q} = 0$ if either $q>0$ or $q = 0<p$. Hence the spectral sequence collapses and we can read off the result.
\end{proof}

\begin{cor}\label{sufft}
Let $V$ be a $\U_A$-module, which satisfies
\be\label{assump-filt}
V^*\otimes_A V\text{ has a $\nabla_A$-filtration.}
\ee
Then $\End_{\U_\zeta}(V_\zeta^{\otimes r})\simeq \End_{\U_A}(V^{\otimes r})\otimes_A k$.
\end{cor}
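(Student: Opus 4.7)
The plan is to apply Lemma \ref{lemma-kempf} after recasting $\End_{\U_A}(V^{\otimes r})$ as a $\Hom$-space of the form treated by that lemma. Using the tensor-Hom adjunction---valid since $V$, and hence $V^{\otimes r}$, is finitely generated and free over $A$---we obtain natural $\U_A$-module isomorphisms
\[
\End_{\U_A}(V^{\otimes r}) \;\cong\; \Hom_{\U_A}\!\bigl(A,\; \End_A(V^{\otimes r})\bigr) \;\cong\; \Hom_{\U_A}\!\bigl(A,\; (V^{\ast} \otimes V)^{\otimes r}\bigr),
\]
where the second isomorphism combines the canonical $\U_A$-module identification $\End_A(V) \cong V^{\ast} \otimes V$ with a rearrangement of tensor factors via the $R$-matrix of $\U_A$ (available integrally on finite tensor products, as used elsewhere in the paper). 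The same identifications hold after specialisation to $\U_\zeta$.

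The trivial module $A = \Delta_A(0)$ is tautologically $\Delta_A$-filtered. Setting $N := (V^{\ast} \otimes V)^{\otimes r}$, it therefore suffices by Lemma \ref{lemma-kempf} to verify that $N$ has a $\nabla_A$-filtration. The base case $r=1$ is the hypothesis of the corollary. For the inductive step we invoke the quantum analogue of the classical Paradowski--Mathieu theorem, namely that the tensor product of two $\nabla_A$-filtered $\U_A$-modules is again $\nabla_A$-filtered; granted this, induction on $r$ immediately yields the $\nabla_A$-filtration on $N$.

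Combining these ingredients, Lemma \ref{lemma-kempf} gives
\[
\Hom_{\U_\zeta}(A_\zeta, N_\zeta) \;\cong\; \Hom_{\U_A}(A, N) \otimes_A k,
\]
and transporting through the isomorphisms of the first step delivers the desired base change. The principal obstacle is the quantum tensor product theorem for $\nabla_A$-filtrations, a substantive input from the literature rather than a routine verification; the $R$-matrix rearrangement of tensor factors, although technical, is standard in this setting.
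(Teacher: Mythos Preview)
Your proof is correct and follows the same route as the paper: rewrite $\End_{\U_A}(V^{\otimes r})$ as $\Hom_{\U_A}(\Delta_A(0),(V^*\otimes V)^{\otimes r})$ via adjunction and then apply Lemma~\ref{lemma-kempf}. You are more explicit than the paper about two points it leaves tacit---the $R$-matrix rearrangement needed to identify $\End_A(V^{\otimes r})$ with $(V^*\otimes V)^{\otimes r}$, and the quantum tensor-product theorem ensuring that $(V^*\otimes V)^{\otimes r}$ inherits a $\nabla_A$-filtration---and you are right to flag the latter as the substantive external input.
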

\begin{proof}
We have $\End_{\U_A}(V^{\otimes r})\simeq\Hom_{\U_A}(\Delta_A(0), 
(V^*\otimes V)^{\otimes r})$, because $\Delta_A(0)$ is the trivial 
$\U_A$-module $A$. By the assumption (\ref{assump-filt}), we may apply 
Lemma \ref{lemma-kempf} to obtain the statement.
\end{proof}

As usual we denote by $\rho$ half the sum of the positive roots. Recall the concept of strongly multiplicity free modules from \cite{LZ1}. 

To see that there are significant cases where the above result applies, we have

\begin{prop}\label{pos-wts}
Suppose $V=\Delta_A(\lambda)$ for some dominant weight $\lambda$.
Assume that $V_q$ is strongly multiplicity free, and that
$-w_0\lambda+\mu+\rho$ is dominant for each weight $\mu$ of $V$.
Then $V^*\otimes V$ has a $\nabla_A$-filtration.
\end{prop}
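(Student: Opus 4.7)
The plan is to realize $V^* \otimes V$ as an induced module from the quantum Borel subalgebra and lift a natural weight-wise $B$-filtration to a $\nabla_A$-filtration using quantum Kempf vanishing.

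First, I would identify $V^* \cong \nabla_A(-w_0\lambda)$ and apply the tensor identity (projection formula) for the quantum induction functor $\mathrm{ind}_B^G$ to write
\[
V^* \otimes V \;\cong\; \nabla_A(-w_0\lambda) \otimes \Delta_A(\lambda) \;\cong\; \mathrm{ind}_B^G\bigl(k_{-w_0\lambda} \otimes V|_B\bigr),
\]
all formal properties of $\mathrm{ind}_B^G$ being available integrally in the quantum setting of \cite{SRH}. Next, I would filter the $B$-module $V|_B$ by weights: choosing a total order on $X$ that refines the partial order induced by positive roots, the partial sums $\bigoplus_{\mu' \leq \mu} V_{\mu'}$ form a $B$-submodule filtration whose subquotients are the one-dimensional modules $k_\mu$ (with multiplicities $\dim V_\mu$). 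Twisting by $k_{-w_0\lambda}$ shifts these subquotients to $k_{-w_0\lambda + \mu}$.

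The core step is then to apply $\mathrm{ind}_B^G$ to this filtration. By the quantum Borel--Weil--Bott theorem (equivalently the quantum Kempf vanishing of \cite{SRH}), the dominance of $-w_0\lambda + \mu + \rho$ guarantees
\[
R^i\,\mathrm{ind}_B^G\bigl(k_{-w_0\lambda + \mu}\bigr) = 0 \quad \text{for all } i \geq 1
\]
and every weight $\mu$ of $V$: either $-w_0\lambda + \mu$ is itself dominant, in which case $\mathrm{ind}^0$ gives $\nabla_A(-w_0\lambda + \mu)$ with higher derived functors vanishing by \eqref{kempf}; or $\langle -w_0\lambda + \mu, \alpha^\vee\rangle = -1$ for some simple coroot $\alpha^\vee$, in which case $-w_0\lambda + \mu + \rho$ lies on a wall and all $R^i\,\mathrm{ind}$ vanish. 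A short induction on the length of the $B$-filtration, via the long exact sequences of derived inductions, then lifts the filtration to one on $V^* \otimes V$ whose nonzero subquotients are exactly $\nabla_A(-w_0\lambda + \mu)$ for those $\mu$ with $-w_0\lambda + \mu$ dominant --- the required $\nabla_A$-filtration.

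The hardest part is securing the higher-derived-induction vanishing on the shifted weight subquotients; this is exactly what the dominance hypothesis is tailored to ensure via quantum BBW, and without it a single bad weight would contribute an obstruction in $R^1\,\mathrm{ind}$ preventing the $B$-filtration from lifting cleanly. The strongly multiplicity free hypothesis plays a book-keeping role, matching the multiplicities in the resulting filtration to those in the generic decomposition $V_q^* \otimes V_q \cong \bigoplus_\mu \nabla_q(-w_0\lambda + \mu)$ over $K$.
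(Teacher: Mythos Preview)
Your proposal is correct and follows essentially the same line as the paper: identify $V^*\cong\nabla_A(-w_0\lambda)$, use the tensor identity to write $V^*\otimes V$ as an induced module from the Borel, filter the underlying Borel module by weights, and invoke quantum Kempf vanishing (guaranteed by the dominance hypothesis on $-w_0\lambda+\mu+\rho$) to lift this to a $\nabla_A$-filtration.

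The one point of genuine difference is your treatment of the strongly multiplicity free hypothesis. The paper uses it directly: strongly multiplicity free implies the weights of $V$ form a chain, each of multiplicity one, so $\lambda'\otimes V$ automatically has a $\U_A^0\U_A^-$-filtration with one-dimensional quotients indexed by those weights. You instead refine the dominance partial order on weights to an arbitrary total order and (implicitly) filter further inside any weight space of rank $>1$; since $\U_A^0$ acts by scalars on each weight space and $\U_A^-$ strictly lowers weight, any such refinement is still a $\U_A^0\U_A^-$-filtration. This is a valid simplification, and in fact shows that the strongly multiplicity free assumption is not needed at all for the conclusion---your final remark that it plays a ``book-keeping role'' matching generic multiplicities undersells what you have done, since in your argument it plays no role whatsoever. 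What the paper's use of the hypothesis buys is a canonical filtration with no choices; what your version buys is a strictly more general statement.
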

\begin{proof} Recall that $\U_A$ has a triangular decomposition
$\U_A=\U_A^+\U_A^0\U_A^-$, and each weight $\mu$ defines a
$1$-dimensional representation of the subalgebra $\U_A^0\U_A^-$,
which we also denote by $\mu$.

We have $V^*=\nabla_A(\lambda')$ where $\lambda' = -w_0 \lambda$. Moreover $\nabla_A$ is realised as the induction
functor $\Ind_{\U_A^0\U_A^-}^{\U_A}$. Hence by a standard property of induction,
$$
V^*\otimes V=\Ind_{\U_A^0\U_A^-}^{\U_A}(\lambda')\otimes V
=\Ind_{\U_A^0\U_A^-}^{\U_A}(\lambda'\otimes V),
$$
where in this formula the last occurrence of $V$ is its 
restriction to $\U_A^0\U_A^-$. Now the hypothesis that 
$V_q$ is strongly multiplicity free implies that the weights of
$V$ are linearly ordered. But the weights of $\lambda'\otimes V$
are $\{\lambda'+\mu\}$ where $\mu$ runs over the weights of $V$.
This set is therefore a linearly ordered chain, and accordingly,
$\lambda'\otimes V$ has a $\U_A^0\U_A^-$-module filtration
$$
0=F_0\subset F_1\subset \dots\subset F_{d}=\lambda'\otimes V,
$$
where $d=\dim V_q$, with the quotients $F_i/F_{i-1}$ running over
the $\U_A^0\U_A^-$-modules $\lambda'+\mu$. Our hypothesis, together
with (the quantised) Kempf's vanishing theorem imply that the higher (degree $>0$)
cohomology of the corresponding line bundles vanishes, and hence that
induction is exact on this filtration. We therefore have 
a corresponding filtration of $\U_A$-modules
$$
0\subset \nabla_A(F_1)\subset \dots\subset \nabla_A(F_{d})=
\nabla_A(\lambda'\otimes V)=V^*\otimes V.
$$
Since this is a $\nabla_A$-filtration, we may apply \ref{sufft}
to complete the proof.
\end{proof}

\begin{cor}\label{sl2ex}
The conclusion of Proposition \ref{pos-wts} holds in the following cases.
\begin{enumerate}
\item If $V$ is a Weyl module with minuscule highest weight. This includes the 
natural modules in types $A,C$ and $D$ (but not type $B$).
\item If $V$ is any Weyl module for $\U_A(\fsl_2)$.
\item If $V$ is the Weyl module in type $G_2$
with highest weight $2\alpha_1+\alpha_2$ where $\alpha_1$ and $\alpha_2$ denote the two simple roots with $\alpha_2$ long.
\end{enumerate}
\end{cor}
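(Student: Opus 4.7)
The plan is to verify, in each of the three listed cases, the two hypotheses of Proposition \ref{pos-wts}: that $V_q$ is strongly multiplicity free in the sense of \cite{LZ1}, and that $-w_0\lambda + \mu + \rho$ is dominant for every weight $\mu$ of $V$.

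For the minuscule case (1), every weight $\mu$ of $V$ lies in the single Weyl group orbit of $\lambda$, and strong multiplicity freeness of $V_q$ follows from this orbit structure via the criterion of \cite{LZ1}. For the dominance condition, any weight $\mu$ of a minuscule representation satisfies $\langle \mu, \alpha_i^{\vee}\rangle \in \{-1, 0, 1\}$ for each simple coroot $\alpha_i^{\vee}$, so since $\langle \rho, \alpha_i^{\vee}\rangle = 1$ the weight $\mu + \rho$ is already dominant; adding the dominant weight $-w_0\lambda$ then preserves dominance. That the natural modules in types $A$, $C$, $D$ are minuscule, while the natural module in type $B$ is not (it has a zero weight), is standard.

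For the $\fsl_2$ case (2) with $V = \Delta_A(d)$, the weights $d, d-2, \dots, -d$ are one-dimensional and linearly ordered, and the Clebsch-Gordan decomposition
\[
V_q \ot V_q \cong \Delta_q(2d) \oplus \Delta_q(2d-2) \oplus \cdots \oplus \Delta_q(0)
\]
confirms that $V_q$ is strongly multiplicity free. Since $w_0 = -1$ in type $A_1$ and $\rho = 1$, the dominance condition reduces to $d + \mu + 1 \geq 0$, which holds for every weight $\mu \geq -d$ of $V$.

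For the $G_2$ case (3), $\lambda = 2\alpha_1 + \alpha_2$ is the fundamental weight attached to the short simple root $\alpha_1$, and $V$ is the seven-dimensional fundamental representation whose weights $\{0, \pm\alpha_1, \pm(\alpha_1 + \alpha_2), \pm(2\alpha_1 + \alpha_2)\}$ each have multiplicity one. Strong multiplicity freeness follows from an explicit decomposition of $V_q \ot V_q$ into distinct irreducibles, each occurring once. Using that $w_0 = -1$ in type $G_2$ and $\rho = 5\alpha_1 + 3\alpha_2$, the dominance of $\lambda + \mu + \rho$ at each of the seven weights $\mu$ is a direct calculation from the $G_2$ Cartan matrix; the constraint is tight at several weights but is never violated.

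The main obstacle is case (3), as it resists any appeal to minusculeness or to the one-dimensional theory available for $\fsl_2$, and instead requires an explicit enumeration of the weights of $V$, a direct verification that $V_q \ot V_q$ is multiplicity free, and a weight-by-weight check of dominance using the $G_2$ Cartan integers. Cases (1) and (2) are immediate from well-known generalities.
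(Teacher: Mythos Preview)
Your plan is the same as the paper's: verify the two hypotheses of Proposition~\ref{pos-wts} case by case, using $(\mu,\alpha^\vee)\in\{0,\pm1\}$ for minuscule weights and an explicit enumeration of the seven $G_2$ weights (the short roots together with $0$). The paper is simply terser, leaving the strong multiplicity free hypothesis entirely to \cite{LZ1} without comment, whereas you spell out the Clebsch--Gordan decomposition in case~(2) and the value $\rho=5\alpha_1+3\alpha_2$ in case~(3).

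One caution on case~(1): your assertion that strong multiplicity freeness ``follows from this orbit structure'' is not literally true for \emph{every} minuscule module. For instance $\Lambda^2\C^4$ in type $A_3$ is minuscule, but its weights $\epsilon_1+\epsilon_4$ and $\epsilon_2+\epsilon_3$ are incomparable in the dominance order, so the linear-ordering consequence used in the proof of Proposition~\ref{pos-wts} fails. The paper's own proof is equally silent on this point. Either read case~(1) as restricted to those minuscule modules actually appearing in the \cite{LZ1} classification (which includes the natural modules singled out in the statement), or observe that the argument for Proposition~\ref{pos-wts} really only requires each weight space of $V$ to be one-dimensional---a condition every minuscule module satisfies---so that the conclusion holds regardless.
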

\begin{proof}
When $V$ is minuscule, it is well known that for any weight $\mu$
of $V$ and any root $\alpha$, we have $(\mu,{\alpha\check~})=\pm 1$ or $0$, and hence (1) 
is clear. The case of $\fsl_2$ is evident, while in the case of type $G_2$,
the weights of the Weyl module in question are the short roots, together with $0$. This easily gives (3).
\end{proof}

\subsection{Multiplicities of tilting modules and dimensions of irreducibles.}

In this subsection we shall prove some rather general results which will allow us to relate multiplicities of indecomposable tilting summands in tensor powers of certain representations of quantum groups to the dimensions of simple modules for the corresponding endomorphism algebras.

\begin{thm}\label{fitting}
Let $k$ be a field, $\U$ a $k$-algebra, and $M$ a finite dimensional (over $k$)
$\U$-module. Let $E=\End_\U(M)$, and assume that for each indecomposable 
direct summand $M'$ of $M$, we have $E'/\Rad E' \simeq k$ where 
$E' = \End_{\U}(M')$. Then
$$
\frac{E}{\Rad E}\simeq \oplus_i M_{d_i}(k),
$$
where $M_d(k)$ is the algebra of $n\times n$ matrices over $k$, $i$ runs over
the isomorphism classes of indecomposable $\U$-modules (of course only a finite number occur),
and the $d_i$ are the multiplicities of the indecomposable summands of $M$.
\end{thm}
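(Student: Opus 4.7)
The plan is to apply a Krull--Schmidt decomposition to $M$ and then identify $\Rad E$ block by block. Since $M$ is finite dimensional over $k$, each endomorphism ring in sight is a finite-dimensional $k$-algebra. The hypothesis $\End_\U(M')/\Rad\End_\U(M')\simeq k$ for each indecomposable summand $M'$, combined with the fact that $k$ has no non-trivial idempotents and that idempotents lift from the semisimple quotient of a finite-dimensional $k$-algebra, forces $\End_\U(M')$ to be local whenever $M'$ is an indecomposable summand of $M$. Hence Krull--Schmidt applies, producing an essentially unique decomposition $M\simeq \oplus_i M_i^{d_i}$ with the $M_i$ pairwise non-isomorphic indecomposables and the multiplicities $d_i$ well defined.

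Writing $H_{ij}:=\Hom_\U(M_i,M_j)$, the endomorphism ring acquires the block decomposition $E \simeq \oplus_{i,j} M_{d_j\times d_i}(H_{ij})$, with diagonal blocks $M_{d_i}(\End_\U(M_i))$. Let $J\subseteq E$ be the $k$-subspace obtained by replacing each diagonal entry $\End_\U(M_i)$ with its radical $\Rad\End_\U(M_i)$ while retaining every off-diagonal block in full. By construction, $E/J\simeq \oplus_i M_{d_i}(\End_\U(M_i)/\Rad\End_\U(M_i))\simeq \oplus_i M_{d_i}(k)$, which is semisimple; therefore $\Rad E\subseteq J$.

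The main obstacle is establishing the reverse inclusion $J\subseteq \Rad E$, which reduces to showing that $J$ is a nilpotent two-sided ideal of $E$. The central observation is that for distinct indices $i\neq j$ and any $f\in H_{ij}$, $g\in H_{ji}$, the composition $gf\in \End_\U(M_i)$ cannot be invertible: otherwise $gf$ would split and yield $M_i$ as a direct summand of $M_j$, contradicting Krull--Schmidt. Locality of $\End_\U(M_i)$ then places $gf\in \Rad\End_\U(M_i)$, and similarly $fg\in\Rad\End_\U(M_j)$. A short bookkeeping argument combining this with the nilpotence of each $\Rad\End_\U(M_i)$ (automatic for finite-dimensional $k$-algebras) shows that a sufficiently high power of $J$ vanishes, so $J\subseteq \Rad E$. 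Combining both inclusions gives $\Rad E=J$ and the desired isomorphism $E/\Rad E\simeq \oplus_i M_{d_i}(k)$ follows.
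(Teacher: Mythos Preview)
Your proof is correct and follows essentially the same route as the paper's: both single out the ideal $J$ of endomorphisms whose every block component is a non-isomorphism, show $J$ is nilpotent by a pigeonhole argument on repeated indices in long products, and then read off $E/J\simeq\bigoplus_i M_{d_i}(k)$. Two small points of order: your observation that $gf\in\Rad\End_\U(M_i)$ for $i\neq j$ is already needed to check that $J$ is a two-sided ideal (so that $E/J$ is a ring at all), and your ``short bookkeeping'' for nilpotence requires this observation for loops $M_i\to M_{j_1}\to\cdots\to M_i$ of arbitrary length rather than just length two---the paper isolates exactly this cycle statement as its key claim, though the extension is immediate from your same splitting argument.
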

\begin{proof}
Let $M=M_1\oplus M_2\oplus\dots\oplus M_n$ be a decomposition of $M$ into indecomposables.
Then any endomorphism $\phi\in E$ may be written $\phi=(\phi_{ij})_{1\leq i,j\leq n}$,
where $\phi_{ij}\in\Hom_\U(M_j,M_i)$.

Now by Fitting's Lemma, any endomorphism of $M_i$ is either an automorphism or
is nilpotent. It follows that for each $i$, the set $R_i:=\{\psi\in E_i(:=\End_\U(M_i))\mid
\psi\text{ is not an automorphism}\}$ is a nilpotent ideal of $E_i$. In particular
there is an integer $N_i$ such that $R_i^{N_i}=0$.

Next, suppose that we have a sequence $i=i_1,i_2,\dots,i_{p+1}=i$, and 
$\phi_j:=\phi_{i_j,i_{j+1}}\in\Hom_\U(M_{i_{j+1}},M_{i_j})$ for $j=1,2,\dots,p$.
Consider $\psi_1:=\phi_1\dots\phi_{p-1}\phi_p\in\Hom_\U(M_i,M_i)$.
We shall show that 

\be\label{fittingcomp}
\begin{aligned}
&\text{\it If $\psi_1$ is an automorphism, then the $M_{i_j}$ are all isomorphic,}\\
&\text {\it  and $\phi_j$ is an isomorphism for each $j$.}\\
\end{aligned}
\ee

To see (\ref{fittingcomp}), let $\psi_j=\phi_j\dots\phi_p\phi_1\dots\phi_{j-1}
\in\Hom(M_{i_j},M_{i_j})$. If $\psi_j$ is an automorphism for each $j$, then 
for each $j$, $\phi_{j-1}$ is injective and $\phi_j$ is surjective, whence each 
$\phi_j$ is an automorphism, and we are done. If not, then there is some $j$ such
that $\psi_j$ is nilpotent. It follows that $\psi_1^N=0$ for large $N$, a contradiction.
This proves (\ref{fittingcomp}).

Now let $J$ be the subspace of $E$ consisting of the endomorphisms $\phi$ such that
$\phi_{ij}$ is not invertible for each pair $i,j$. If $E_i:=\End_\U(M_i)$
and $J_{ij}:=\{\phi_{ij}\in\Hom_\U(M_j,M_i)\mid \phi_{ij} \text{ is not invertible}\}$,
then again by Fitting's Lemma, $J_{ij}$ is an $(E_i,E_j)$ bimodule, and using
the observation (\ref{fittingcomp}) above, it is clear that $J$ is an ideal of $E$.
We shall show that $J$ is nilpotent.

Let $\phi^{(1)},\dots,\phi^{(\ell)}$ be a sequence of elements of $J$. Then 
$$
(\phi^{(1)}\dots\phi^{(\ell)})_{ij}=
\sum_{k_1,k_2,\dots,k_{\ell-1}}\phi_{ik_1}^{(1)}\phi_{k_1k_2}^{(2)}\dots\phi_{k_{\ell-1}j}^{(\ell)},
$$
where the sum is over all sequences $k_1,k_2,\dots,k_{\ell-1}$, with $1\leq k_i\leq n$
for all $i$.

Now we have seen that for any $j$, if $R_j=\Rad E_j$, then there is an integer $N_j$
such that $R_j^{N_j}=0$. If we take $\ell\geq N_1+N_2+\dots+N_n+2$, then there some index $a$
which occurs among the $k_i$ at least $N_a+1$ times. Then each summand in the
expression for $(\phi^{(1)}\dots\phi^{(\ell)})_{ij}$ contains a product of $N_a$
non-invertible elements of $E_a$ for some $a$, and hence is $0$.
Thus $J^{N_1+\dots+N_n+2}=0$.

Finally, it is clear that since we have assumed that
$E_i/R_i\simeq k$ for each $i$, $E/J\simeq\oplus_{i=1}^nM_{d_i}(k)$.
\end{proof}

The proof above actually yields more.
\begin{cor}\label{fittinggen}
Let $M$ be as in Theorem \ref{fitting} but drop the assumption on the endomorphism rings of direct summands of $M$.
Then there are division rings $D_i$ over $k$
such that 
$$
\frac{E}{\Rad E}\simeq \oplus_i M_{d_i}(D_i).
$$
\end{cor}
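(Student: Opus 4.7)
The plan is to rerun the proof of Theorem \ref{fitting} essentially verbatim, observing that the hypothesis $E_i/\Rad E_i \cong k$ enters only at the very last sentence of that proof. Without it, Fitting's Lemma still applies to each finite-dimensional indecomposable summand $M_i$, so $E_i := \End_\U(M_i)$ is a local ring. Setting $D_i := E_i/\Rad E_i$, one obtains a finite-dimensional division algebra over $k$; no extra hypothesis is needed beyond the indecomposability of $M_i$ and the finite-dimensionality of $M$ over $k$.

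Next, with $D_i$ in place of $k$ as the residue division ring of $E_i$, I would reuse verbatim the construction of the ideal $J \subseteq E$ consisting of those $\phi = (\phi_{ij})$ whose block entries are each non-invertible, together with the observation (\ref{fittingcomp}) and the proof that $J$ is a nilpotent two-sided ideal. None of these steps uses the identification of the residue fields; they rely only on Fitting's Lemma and the indecomposability of the $M_i$. In particular, (\ref{fittingcomp}) still implies that between indecomposable summands from different isomorphism classes no morphism can be invertible, so $E/J$ acquires a block-diagonal structure indexed by the isomorphism classes of the indecomposable summands.

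Finally, grouping the summands of $M$ by isomorphism type, with $d_i$ denoting the multiplicity of class $i$, one reads off $E/J \cong \bigoplus_i M_{d_i}(D_i)$. Being a direct sum of matrix algebras over division rings, this quotient is semisimple, so $\Rad E \subseteq J$; combined with the nilpotence of $J$, this gives $\Rad E = J$, and the stated isomorphism follows.

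There is no serious obstacle: the entire content of the corollary is the observation that the proof of Theorem \ref{fitting} produces a nilpotent ideal $J$ with semisimple quotient without invoking the hypothesis $E_i/\Rad E_i \cong k$ except in its final line, so dropping that hypothesis simply permits arbitrary finite-dimensional division $k$-algebras $D_i$ to appear as the diagonal residue rings.
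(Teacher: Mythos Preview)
Your proposal is correct and follows essentially the same approach as the paper, which simply observes that Fitting's Lemma gives $E_i/R_i$ a division algebra $D_i$ over $k$, and then refers back to the argument of Theorem~\ref{fitting}. Your write-up is more detailed (in particular, you make explicit the equality $J=\Rad E$ via the two inclusions), but the underlying idea is identical.
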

\begin{proof}
In this case Fitting's Lemma yields that $E_i/R_i$ is a division algebra $D_i$
over $k$, and the argument above proves the assertion.
\end{proof}

The application to our situation arises through the following property of tilting modules for quantum groups. We let $k$ be a field considered as an $A$-algebra via $q \mapsto \zeta \in k\setminus\{0\}$ and let $U_\zeta$ be as in Section 5.1. Then

\begin{prop}\label{tilt-endo}
Let $M$ be an indecomposable tilting module for $\U_\zeta$ and set  
$E = \End_{\U_\zeta}(M)$. Then  $E/\Rad E \simeq k$. 
\end{prop}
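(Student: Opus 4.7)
The approach exploits the well-known structural fact that any indecomposable tilting module for $\U_\zeta$ is of the form $T(\lambda)$ for some dominant weight $\lambda$, and that the $\lambda$-weight space of $T(\lambda)$ is one-dimensional. This is because $T(\lambda)$ admits a Weyl filtration in which $\Delta(\lambda)$ appears exactly once and all other $\Delta(\mu)$ satisfy $\mu<\lambda$; the Weyl module $\Delta(\lambda)$ itself has a one-dimensional $\lambda$-weight space, and no other summand contributes to weight $\lambda$. So I would begin by writing $M = T(\lambda)$ and fixing a nonzero vector $v_\lambda \in M_\lambda$.

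Since every $\U_\zeta$-endomorphism of $M$ preserves weight spaces, and $M_\lambda$ is one-dimensional, there is a well-defined $k$-algebra homomorphism $c\colon E \to k$ sending $\phi \in E$ to the scalar by which $\phi$ acts on $M_\lambda$. The map $c$ is surjective, since $c(a\cdot\id_M) = a$ for every $a \in k$, so the proposition reduces to proving that $\ker c = \Rad E$.

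Both inclusions then follow from Fitting's lemma applied to the finite-dimensional indecomposable $\U_\zeta$-module $M$: the ring $E$ is local, and every endomorphism of $M$ is either an automorphism or nilpotent. If $c(\phi) = 0$, then $\phi$ vanishes on the nonzero subspace $M_\lambda$, so is not injective, therefore nilpotent, and hence lies in the unique maximal ideal $\Rad E$ of the local ring $E$. Conversely, if $\phi \in \Rad E$, then $\phi$ is nilpotent on $M$, so its action $\phi|_{M_\lambda}$ is a nilpotent element of $k \cong \End_k(M_\lambda)$ and must vanish, giving $c(\phi)=0$. This yields $\ker c = \Rad E$ and therefore $E/\Rad E \simeq k$.

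The main obstacle is conceptual rather than computational: one needs to invoke the classification of indecomposable tilting modules for $\U_\zeta$ and the fact that the highest weight occurs with multiplicity one. Once these inputs are granted, the rest of the argument is a purely formal application of Fitting's lemma combined with the observation that endomorphisms preserve weight spaces.
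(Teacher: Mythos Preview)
Your proof is correct and follows essentially the same approach as the paper's: both invoke the Ringel--Donkin classification to write $M=T(\lambda)$ with $\dim M_\lambda=1$, observe that any $\varphi\in E$ acts on $M_\lambda$ by a scalar $a$, and then use Fitting's lemma (via indecomposability of $M$) to conclude that $\varphi-a\,\id_M$, being non-invertible, lies in $\Rad E$. The paper's argument is slightly terser---it does not name the map $c$ or verify the reverse inclusion explicitly---but the underlying logic is identical.
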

\begin{proof}
By the Ringel-Donkin classification \cite{Do93} (see \cite{A-tilt} for the adaption to the quantum case) of indecomposable tilting modules we get
that $M$ has a unique highest weight $\lambda \in X^+$ and that the weight space $M_\lambda$ is $1$-dimensional. 
Therefore any $\varphi \in \End_{\U_\zeta}(M)$ is given by a scalar $a \in k$ on 
$M_\lambda$. But then $\varphi - a\ \id_M$ is not an
automorphism, i.e. $\varphi - a\  \id_M \in \Rad E$.
\end{proof}

We denote the indecomposable tilting module for $U_\zeta$ with highest weight $\lambda$ by $\CT_\zeta(\lambda)$ and for an arbitrary tilting module $\CT$ for $U_\zeta$ we write $(\CT:\CT_\zeta(\lambda))$ for the multiplicity with which $\CT_\zeta(\lambda)$ occurs as a summand of $\CT$. Then 
Theorem \ref{fitting} together with Proposition \ref{tilt-endo} give

\begin{cor} \label{tilt-simp}
For any tilting module $\CT$ for $U_\zeta$ and any $\lambda \in X^+$ we have
$$ 
(\CT:\CT_\zeta(\lambda)) = \dim_k L_\zeta(\lambda),$$
where $L_\zeta(\lambda)$ is the simple module for the algebra $E = \End_{U_\zeta}(\CT)$ corresponding to $\lambda$.
\end{cor}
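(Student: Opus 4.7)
The strategy is to combine Theorem \ref{fitting} with Proposition \ref{tilt-endo}, and then read off the dimensions of simple modules from the semisimple quotient $E/\Rad E$.

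First, I would decompose $\CT$ into indecomposable tilting summands. By the Ringel--Donkin classification (already used in the proof of Proposition \ref{tilt-endo}), we may write
\[
\CT \;\cong\; \bigoplus_{\lambda \in X^+} \CT_\zeta(\lambda)^{\oplus d_\lambda},
\qquad d_\lambda = (\CT : \CT_\zeta(\lambda)),
\]
with only finitely many nonzero $d_\lambda$. Proposition \ref{tilt-endo} guarantees that the hypothesis of Theorem \ref{fitting} is satisfied: each indecomposable summand $\CT_\zeta(\lambda)$ has endomorphism ring whose quotient by its radical is $k$. (This also guarantees Krull--Schmidt, so the multiplicities $d_\lambda$ are well defined.)

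Next, I would apply Theorem \ref{fitting} directly with $\U = U_\zeta$ and $M = \CT$. The theorem yields an isomorphism of $k$-algebras
\[
E/\Rad E \;\cong\; \bigoplus_{\lambda} M_{d_\lambda}(k),
\]
where the sum runs over those $\lambda \in X^+$ with $d_\lambda > 0$.

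Finally, I would invoke the standard representation theory of semisimple (Wedderburn) algebras: the simple modules of $\bigoplus_\lambda M_{d_\lambda}(k)$ are, up to isomorphism, the column modules $k^{d_\lambda}$, one for each summand, and the simple modules of $E$ coincide with those of $E/\Rad E$. Labelling these simples by $\lambda$ in the natural way gives a family $\{L_\zeta(\lambda)\}$ indexed by $\{\lambda : d_\lambda > 0\}$, with $\dim_k L_\zeta(\lambda) = d_\lambda$, which is exactly the claim. (For $\lambda$ with $d_\lambda = 0$ both sides are zero, so the formula extends trivially.)

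There is no real obstacle here; the substantive content has been placed into Theorem \ref{fitting} and Proposition \ref{tilt-endo}. The only point requiring a moment of care is to verify that the labelling of simples of $E$ by highest weights $\lambda$ is consistent with the indexing of the matrix blocks produced by Theorem \ref{fitting}; this is immediate from the proof of that theorem, since the block indexed by $\lambda$ arises precisely from endomorphisms factoring through the isotypic component of $\CT_\zeta(\lambda)$.
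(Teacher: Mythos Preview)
Your proposal is correct and follows exactly the approach the paper intends: the corollary is stated there as an immediate consequence of Theorem \ref{fitting} together with Proposition \ref{tilt-endo}, with no further proof given. You have simply spelled out the details (verifying the hypothesis, applying Wedderburn to $E/\Rad E$, and matching the labelling of simples with the isotypic components) that the paper leaves implicit.
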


\subsection{Multiplicities for $\U_\zeta(\fsl_2)$}
We now apply the above general results to $\fsl_2$. With $k$ and $\zeta$ as above, the indecomposable  tilting modules in this case are $\CT_\zeta(m)$ with $m \in \N$. If $\zeta$ is not a root of unity in $k$ then the category of finite dimensional $U_\zeta$-modules is semisimple and behaves exactly like the corresponding category for the generic quantum group $U_q$. 

From now on we assume that $\zeta$ is a root of unity; for the specialisation $\U_\zeta$ etc., we assume that
the homomorphism $A\to k$ is given by $q\mapsto \zeta$ (so $q^{\frac{1}{2}}\mapsto \sqrt\zeta$) and we set $\ell = \ord (\zeta^2)$. 
If $d$ is a positive integer with $d < \ell$ we have $\Delta_\zeta(d) = \CT_\zeta(d)$ and all the 
tensor powers $\CT_r = \Delta_\zeta(d)^{\otimes r}$ are also tilting modules. 
We set $E_\zeta(d,r) = \End_{U_\zeta}(\CT_r)$. By Lemma \ref{lemma-kempf} we have
$$ E_\zeta(d,r) = E_r(d, \tA)\otimes _{\tA} k,$$
where as before $\tA = A[([d]!)^{-1}]$. Note that our assumption $\ell>d$ ensures that the specialization $\phi_\zeta : A \rightarrow k$ factors through $\tA$ making $k$ into an $\tA$-algebra.

Our cellularity results from Section 3 imply that
\be\label{eq:endzeta}
E_\zeta(d,r) \cong p_\zeta\TL_{dr}(k)p_\zeta,
\ee
where $p_\zeta$ is the specialisation at $q=\zeta$ of the idempotent 
$p\in\TL_{dr}(\tA)$. Note that in $\TL_{dr}(k)=\TL_{dr,\zeta}(k)$ the generators $f_i$ satisfy $f_i^2=(\zeta+\zeta\inv)f_i$.

The simple modules for the cellular algebra $p_\zeta\TL_{dr}(k)p_\zeta$ are parametrised by the poset $\Lambda = \{m \in \Z \mid 0 \leq m \leq dr \text { and } dr - m \in 2\Z\}$, see Section 4.2. We denote the simple module associated with $m \in \Lambda$ by $L_\zeta(m)$. Then we get

\begin{thm}\label{thm:mult}
In the above notation we have for $m \in \Lambda$
$$ (\CT_r : \CT_\zeta(m)) = \dim_k L_\zeta(m).$$
This multiplicity is the rank of the matrix whose rows and columns are labelled by $\CB(d,r;m)$ 
(see Section 4.1) and whose $D_1,D_2$ entry is the coefficient of the identity map $m\to m$ 
(in the Temperley Lieb category) in the expansion  of $D_2^*p_\zeta D_1$ as a linear combination
of diagrams from $m$ to $m$.
\end{thm}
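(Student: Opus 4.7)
The plan is to prove the two equalities separately: the first via the tilting-module machinery of Subsection 5.2, and the second via the general theory of bilinear forms on cell modules of cellular algebras.

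\textbf{First equality.} I would begin by checking that $\CT_r=\Delta_\zeta(d)^{\otimes r}$ is a tilting module for $\U_\zeta$. Since $d<\ell$, the Weyl module $\Delta_\zeta(d)$ coincides with the indecomposable tilting module $\CT_\zeta(d)$, and tensor products of tilting modules are again tilting by the (quantum version of the) Ringel--Donkin theory cited in the proof of \propref{tilt-endo}. Hence \corref{tilt-simp} applies with $\CT=\CT_r$ and $E=E_\zeta(d,r)$, giving
\[
(\CT_r:\CT_\zeta(m))=\dim_k L_\zeta(m)
\]
for each $m\in X^+$. Summands $\CT_\zeta(m)$ with $m\notin\Lambda$ cannot occur because all weights of $\CT_r$ lie in $\Lambda$, matching the parametrising set of simples for the cellular algebra in \eqref{eq:endzeta}.

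\textbf{Second equality.} Here I would invoke the general fact from \cite[\S 3]{GL96} that for a cellular algebra $\CA$ over a field, with cell datum $(\Lambda,M,C,{}^*)$, each cell module $W(t)$ ($t\in\Lambda$) carries a canonical bilinear form $\phi_t$, and the simple module $L(t)$ (when nonzero) satisfies $L(t)\cong W(t)/\Rad(\phi_t)$; in particular, $\dim_k L(t)$ equals the rank of the Gram matrix of $\phi_t$ in any basis of $W(t)$. Specialising to the cellular structure on $\CA=p_\zeta\TL_{dr}(k)p_\zeta$ built in Section \ref{ss:cell}, the cell module $W(m)$ has basis indexed by $\CB(d,r;m)$, and the form $\phi_m$ is read off from the coefficient in the cellular multiplication rule
\[
C^m_{S_1,S_2}\,C^m_{T_1,T_2}\equiv \phi_m(S_2,T_1)\,C^m_{S_1,T_2}\pmod{\CA(<m)}.
\]

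\textbf{Identifying the Gram matrix.} Using $C^m_{S_1,S_2}=pS_1S_2^{*}p$ and $C^m_{T_1,T_2}=pT_1T_2^{*}p$, the product becomes
\[
C^m_{S_1,S_2}\,C^m_{T_1,T_2}=p\,S_1\,(S_2^{*}\,p\,T_1)\,T_2^{*}\,p .
\]
Expanding $S_2^{*}p_\zeta T_1\colon m\to m$ as a linear combination of diagrams in the Temperley--Lieb category, every summand with strictly fewer than $m$ through-strings lies in $\CA(<m)$ after reinsertion into $p S_1(\cdot)T_2^{*}p$, and the only diagram from $m$ to $m$ contributing to the quotient modulo $\CA(<m)$ is the identity $m\to m$. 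Therefore $\phi_m(S_2,T_1)$ is exactly the coefficient of that identity in the expansion of $S_2^{*}p_\zeta T_1$, which is the matrix entry described in the statement. Combined with the first equality and the cellular-algebra fact cited above, this yields the ranked-matrix description.

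\textbf{Main obstacle.} The non-routine part is the bookkeeping in the second step: identifying the Graham--Lehrer form on the cell module with the coefficient of the identity diagram and verifying that the lower-order Temperley--Lieb diagrams really do land in $\CA(<m)$ after being sandwiched between $pS_1(\cdot)T_2^{*}p$. This is the same kind of computation as in the verification of axiom (C3) at the end of Section 4, but now one has to extract a precise scalar rather than just check an ideal containment; I expect this step to absorb most of the technical work, whereas the tilting-side equality is essentially a direct application of \corref{tilt-simp}.
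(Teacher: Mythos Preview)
Your proposal is correct and follows essentially the same approach as the paper: the first equality is obtained directly from \corref{tilt-simp} (the paper states it in one line, while you add the verification that $\CT_r$ is tilting), and the second is the standard identification of $\dim_k L_\zeta(m)$ with the rank of the Gram matrix of the cellular form, together with the observation that in $S_2^*p_\zeta T_1$ only the identity diagram $m\to m$ survives modulo $\CA(<m)$. The paper uses the square ${C^m_{S,T}}^2$ rather than the general product $C^m_{S_1,S_2}C^m_{T_1,T_2}$ to extract the form, but this is a cosmetic difference.
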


\begin{proof}
The equality in the theorem is an immediate consequence of Corollary \ref{tilt-simp}. To see the second statement note that 
$L_\zeta(m)$ is realised as follows. 
Let $W_\zeta(m)$ be the cell module corresponding to $m$. This has $k$-basis $C_S$, 
$S\in \CB(d,r;m)$,
the monic diagrams $D$ from $m$ to $dr$ such that $L(D)\subseteq\{d,2d,\dots,(r-1)d\}$.
We may think of $C_S$ as $p_\zeta S$, and then the
$E_\zeta(d,r)$-action is by left composition: for $x\in E_\zeta(d,r)$, 
$x.C_S=\sum_{T\in\CB(d,r;m)} a(T,S)C_{T}$, where 
$$
xp_\zeta S= \sum_{T\in\CB(d,r;m)} a(T,D)p_\zeta T+\text{ lower terms},
$$
where ``lower'' means ``having fewer through arcs''.
  
There is an invariant form $(-,-)$ on $W_\zeta(m)$, defined by
\be\label{eq:form}
{C^m_{S,T}}^2\in (C_S,C_T)C^m_{S,T}+E_\zeta(d,r)(<m)\text{ for $S,T$ in }\CB(d,r;m).
\ee
The radical $\Rad_\zeta(m)$ of this form is a submodule of $W_\zeta(m)$, and 
$$
L_\zeta(m)
= W_\zeta(m)/\Rad_\zeta(M).
$$
It is therefore evident that $\dim L_\zeta(m)$ is equal to
the rank of the matrix $M_{m,\zeta}$, whose rows and columns are indexed by $\CB(d,r;m)$,
and whose $(S,T)$-entry is $(C_S,C_T)$.

Finally, since ${C^m_{S,T}}^2=p_\zeta S(T^*p_\zeta S)T^*p_\zeta$, and noting that
$T^*p_\zeta S$ is a linear combination of diagrams $:m\to m$, it follows
from \eqref{eq:form} that $(C_S,C_T)$ is the coefficient of $\id:m\to m$.
\end{proof}

Since $\dim W_\zeta(dr)=1$ and the coefficient of $\id:d\to d$ in $p_d(\zeta)$
is $1$, it is immediate from the Theorem that the multiplicity of $\CT_\zeta(dr)$ 
is $1$. We finish this section with a less trivial example.

\begin{example}\label{ex:sub}
Take $k=dr-2$. We shall compute the multiplicity of $\CT_\zeta(k)$ in
$\Delta_\zeta(d)^{\ot r}$ for any $d,r$. Here $\CB(d,r;dr-2)=\{S_1,S_2,\dots,S_{r-1}\}$,
where $S_i$ is as shown in Figure 4.

\begin{center}
\begin{tikzpicture}[scale=1.2]


\foreach \x in {1,2,5,6,7,8,11,12}
\filldraw(\x,2) circle (0.05cm);

\draw (1,2)--(2,2);\draw(5,2)--(6,2);\draw(7,2)--(8,2);\draw(11,2)--(12,2);
\draw (1,1.5)--(2,1.5);\draw(5,1.5)--(6,1.5);\draw(7,1.5)--(8,1.5);\draw(11,1.5)--(12,1.5);
\draw (1,2)--(1,1.5);\draw (2,2)--(2,1.5);\draw (5,2)--(5,1.5);\draw (6,2)--(6,1.5);
\draw (7,2)--(7,1.5);\draw (8,2)--(8,1.5);\draw (11,2)--(11,1.5);\draw (12,2)--(12,1.5);

\draw (1,1.5)--(1,0);
\draw (2,1.5)--(2,0);
\draw (5,1.5)--(5,0);
\draw (8,1.5)--(8,0);
\draw (11,1.5)--(11,0);
\draw (12,1.5)--(12,0);

\draw node[above] at (1,2){\tiny 1};
\draw node[above] at (6,2){\tiny{id}};
\draw node[above] at (7,2){\tiny id+1};
\draw node[above] at (12,2){\tiny{dr}};
\draw node[above] at (2,2){\tiny{d}};\draw node[above] at (5,2){\tiny{(i-1)d+1}};
\draw node[above] at (8,2){\tiny{d(i+1)}};\draw node[above] at (11,2){\tiny{(r-1)d+1}};


\draw(6,1.5) .. controls (6.3,0.3) and (6.7,0.3) .. (7,1.5);


\draw(3.5,1.75) node {{$\cdots$}};
\draw(9.5,1.75) node {{$\cdots$}};
\draw(1.5,1.75) node {\small{$p_d(\zeta)$}};\draw(5.5,1.75) node {\small{$p_d(\zeta)$}};
\draw(7.5,1.75) node {\small{$p_d(\zeta)$}};\draw(11.5,1.75) node {\small{$p_d(\zeta)$}};
\end{tikzpicture}
\centerline{Figure 4}
\end{center}
\label{fig5}

Now by repeated use of the diagrammatic recursion
\begin{center}
\begin{tikzpicture}[scale=1.2]



\draw (1,1.5)--(2,1.5);\draw(5,1.5)--(6,1.5);\draw(9,2.5)--(10,2.5);\draw(9,2)--(10,2);
\draw (1,1)--(2,1);\draw(5,1)--(6,1);\draw(9,0.5)--(10,0.5);\draw(9,0)--(10,0);
\draw (1,1)--(1,1.5);\draw (2,1)--(2,1.5);\draw (5,1)--(5,1.5);\draw (6,1)--(6,1.5);\draw (10,2)--(10,2.5);
\draw (9,2)--(9,2.5);\draw (4.8,1)--(4.8,1.5);
\draw (9,0)--(9,0.5);\draw (10,0)--(10,0.5);
\draw (10,0.5)--(10,2);
\draw (9.35,0.5)--(9.35,2);



\draw(9,0.5) .. controls (8.5,1) and (8.2,0.3) .. (8,0);
\draw(9,2) .. controls (8.5,1.5) and (8.2,2.2) .. (8,2.5);


\draw(7,1.25) node {{$-\frac{[d-1]}{[d]}$}};
\draw(-1,1.25) node {{(*)}};
\draw(3,1.25) node {{$=$}};
\draw(1.5,1.25) node {\small{$p_d$}};\draw(5.5,1.25) node {\small{$p_{d-1}$}};
\draw(9.5,2.25) node {\small{$p_{d-1}$}};\draw(9.5,0.25) node {\small{$p_{d-1}$}};
\end{tikzpicture}
\end{center}
it is straightforward to compute the Gram matrix $M_{dr-2,\zeta}$ of the invariant form
(see the proof above). One shows that 
$$
(S_i,S_j)=
\begin{cases}
0\text{ if }j\neq i\text{ or }i\pm 1\\
\frac{[2]_{\zeta^d}}{[d]_\zeta}\text{ if }j=i\\
(-1)^{d+1}[d]_\zeta\inv\text{ if }j=i\pm 1.\\
\end{cases}
$$

Hence the Gram matrix of the invariant form is the matrix of size $(r-1)\times(r-1)$ shown below.

$$ 
M_{dr-2,\zeta}=
\frac{1}{[d]_\zeta}
\begin{pmatrix}
\delta &( -1)^{d+1} & 0&\hdots &\dots &\dots\\
(-1)^{d+1} &\delta&(-1)^{d+1} &0 &\hdots&\dots\\
0&(-1)^{d+1} &\delta &(-1)^{d+1} &0&\hdots\\
\vdots&&\ddots&\ddots&\ddots&\\
&&&&\ddots&(-1)^{d+1}\\
0&\dots&\dots&0&(-1)^{d+1}&\delta\\
\end{pmatrix},
$$
where $\delta=\zeta^d+\zeta^{-d}=[2]_{\zeta^d}$.

Now it is easily shown by induction that any $n\times n$ matrix of the form
$$ 
A=
\begin{pmatrix}
a_1 &b_1 & 0&\hdots &\dots &\dots\\
1&a_2&b_2 &0 &\hdots&\dots\\
0&1&a_3&b_3 &0&\hdots\\
\vdots&&\ddots&\ddots&\ddots&\\
&&&&\ddots&b_{n-1}\\
0&\dots&\dots&0&1&a_n\\
\end{pmatrix}
$$
with entries in a principal ideal domain, may be transformed by row and column operations into
$$ 
A'=
\begin{pmatrix}
1 &0 & 0&\hdots &\dots &\dots\\
0&1&0 &0 &\hdots&\dots\\
0&0&1&0 &0&\hdots\\
\vdots&&\ddots&\ddots&\ddots&\\
&&&\ddots&1&0\\
0&\dots&\dots&0&0&D\\
\end{pmatrix},
$$
where $D=\det(A)$.

It follows that the rank of the Gram matrix $M_{dr-2,\zeta}$ is $r-1$ if $\det M_{dr-2,\zeta}\neq 0$,
while if $\det M_{dr-2,\zeta}=0$, the rank is $r-2$.

Now the determinant of $[d]_\zeta M_{dr-2,\zeta}$ is easily computed
(cf. \cite[(6.18.2)]{GL96}), and using this, we see that
$$
\det M_{dr-2,\zeta}=(-1)^{(d+1)(r+1)}([d]_\zeta)^{-(r-1)}[r]_{(-1)^{d+1}\zeta^d}.
$$
It therefore follows that the multiplicity of $\CT_\zeta(dr-2)$
in $\Delta_\zeta(d)^{\otimes r}$ is 

$$
\begin{cases}
r-1\text{ if }[r]_{(-1)^{d+1}\zeta^d}\neq 0\\
r-2\text{ otherwise.}
\end{cases}
$$
Finally, observe that $[r]_{(-1)^{d+1}\zeta^d}=0\iff \zeta^{2dr}=1$. Hence if we write 
(using the convention that for any root of unity $\xi$, we denote by $|\xi|$ or by $\ord(\xi)$
the multipliciative order of $\xi$)
\be\label{eq:ell}
\ell=\begin{cases}
|\zeta|\text{ if $|\zeta|$ is odd}\\
\frac{|\zeta|}{2}\text{ if $|\zeta|$ is even}\\
\end{cases},
\ee
then $\ell=|\zeta^2|$, whence the multiplicity of  $\CT_\zeta(dr-2)$
in $\Delta_\zeta(d)^{\otimes r}$  is given by
\be\label{eq:ex}
(\CT_r:\CT_\zeta(dr-2)) =
\begin{cases}
r-1\text{ if }\ell\not | dr\\
r-2\text{ if }\ell|dr.\\
\end{cases}
\ee
This shows also by standard cellular theory that the cell module $W_\zeta(dr-2)$ of $E_\zeta(d,r)$) is simple if 
$\ell\not | dr$, while if $\ell | dr$, then $W_\zeta(dr-2)$ has composition factors $L_\zeta(d,r;dr-2)$ and
$L_\zeta(d,r;dr)$, (the latter being the trivial module), each with multiplicity one.
\end{example}

\section{Complex roots of unity.}

In this section we take $k = \C$ and fix a root of unity $\zeta\in\C$. As before we set  $\ell= \ord(\zeta^2)$. In this case the
structure of the tilting modules $\CT_\zeta(m)$ is well understood, and hence provides
an alternative approach to the computation
of the multiplicities $\mu_\zeta(d,r;m) := (\Delta_\zeta(d)^{\otimes r} : \CT_\zeta(m))$, and hence of the dimensions of the
simple modules for the cellular algebra $E_\zeta(d,r)$  (see Theorem \ref{thm:mult}). In this section we
demonstrate how this is done. We then show how these results on tilting modules may alternatively be deduced from 
results on the decomposition numbers of the algebras $E_\zeta(d,r)$, which are also proved in this section.

\subsection{Structure of tilting modules}

\begin{prop}\label{prop:tilting}
The indecomposable tilting module $\CT_\zeta(m)$ for $\U_\zeta=\U_\zeta(\fsl_2)$ with
highest weight $m$ has the following description.
\begin{enumerate}
\item If either $m<\ell$ or $m\equiv -1(\mod \ell)$ then $\CT_\zeta(m)\simeq\Delta_\zeta(m)$ is irreducible.
\item Write $m=a\ell+b$, where $a\geq 1$ and $0\leq b<\ell-1$. Then $\CT_\zeta(m)$ is the unique non-trivial
extension
$$
0\lr\Delta_\zeta(m)\lr\CT_\zeta(m)\lr \Delta_\zeta(m-2b-2)\lr 0.
$$
\end{enumerate} 
\end{prop}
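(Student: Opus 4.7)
The proof splits naturally into the two cases, of which (1) is comparatively routine and (2) is the substantive assertion.

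For case (1), my strategy is to show that $\Delta_\zeta(m)$ is already irreducible, whence $\Delta_\zeta(m) = L_\zeta(m) = \nabla_\zeta(m)$ is trivially tilting, and the Ringel-Donkin classification forces $\CT_\zeta(m) = \Delta_\zeta(m)$. When $m < \ell$, irreducibility is immediate from the linkage principle for $\U_\zeta(\fsl_2)$, since no other dominant weight lies in the dot-orbit of $m$ under the affine Weyl group. When $m = (a+1)\ell - 1$, I would apply the quantum Steinberg tensor product theorem to obtain $L_\zeta(m) \cong L_\zeta(\ell - 1) \otimes L(a)^{[1]}$, where $L(a)^{[1]}$ denotes the Frobenius twist of the classical $\fsl_2$-module $L(a)$; a dimension count yields $\dim L_\zeta(m) = \ell(a+1) = m+1 = \dim \Delta_\zeta(m)$, so $\Delta_\zeta(m) = L_\zeta(m)$.

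For case (2), set $m' = m - 2b - 2 = a\ell - b - 2$. First I would pin down the Loewy structure of $\Delta_\zeta(m)$: the linkage principle together with a composition-factor count (using Steinberg's theorem, inductively on $a$ if needed) yields the non-split short exact sequence
$$
0 \to L_\zeta(m') \to \Delta_\zeta(m) \to L_\zeta(m) \to 0,
$$
and its dual $0 \to L_\zeta(m) \to \nabla_\zeta(m) \to L_\zeta(m') \to 0$, so in particular $\Ext^1_{\U_\zeta}(L_\zeta(m'), L_\zeta(m)) \neq 0$. I would then construct a candidate $T$ for $\CT_\zeta(m)$ as a non-trivial extension in $\Ext^1_{\U_\zeta}(\Delta_\zeta(m'), \Delta_\zeta(m))$; non-vanishing of this Ext group reduces, via the long exact sequence attached to the head sequence of $\Delta_\zeta(m')$ (which is either irreducible when $a = 1$, or is handled inductively on $a$ when $a \geq 2$), to the just-established $\Ext^1(L_\zeta(m'), L_\zeta(m)) \neq 0$. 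Indecomposability of $T$ is then automatic: any splitting would force the defining sequence to split, contradicting non-triviality, since $\Delta_\zeta(m)$ and $\Delta_\zeta(m')$ are indecomposable with distinct highest weights. Plainly $T$ has highest weight $m$ and a $\Delta$-filtration by construction.

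The main obstacle, and the key technical point, is verifying that $T$ admits a $\nabla$-filtration, i.e., that $T$ is tilting. I plan to handle this via a self-duality argument: since $-w_0 m = m$ for $\fsl_2$, the duality functor $M \mapsto M^*$ sends $T$ to another extension of the same type with $\Delta$-filtration replaced by $\nabla$-filtration; tracking that the chosen extension class is invariant under this duality produces the desired sequence $0 \to \nabla_\zeta(m') \to T \to \nabla_\zeta(m) \to 0$. Alternatively, one can realise $T$ directly as the indecomposable summand of highest weight $m$ in an appropriate tilting module (for instance a suitable tensor product of known tilting modules such as $\Delta_\zeta(1)^{\otimes m}$), whose tilting property is automatic, and then check by character that it agrees with the extension above. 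Once $T$ is known to be tilting, the Ringel-Donkin classification identifies $T \simeq \CT_\zeta(m)$, and one-dimensionality of the relevant $\Ext^1$ gives uniqueness of the extension.
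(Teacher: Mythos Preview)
Your treatment of case (1) matches the paper's. For case (2) the two approaches diverge in an instructive way: you build $T$ as an abstract non-split extension, so indecomposability is immediate and the $\nabla$-filtration becomes the obstacle; the paper instead realises $T$ as the linkage-block component of the tilting module $\Delta_\zeta(a\ell-1)\otimes_\C\Delta_\zeta(b+1)$ (each factor is tilting by case (1)), so the tilting property comes for free and the work shifts to indecomposability, which the paper settles by showing $\Hom_{\U_\zeta}\bigl(\Delta_\zeta(a\ell-1)\otimes\Delta_\zeta(b+1),\CL_\zeta(m)\bigr)=0$ via the Steinberg tensor product theorem.

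Your self-duality step has a gap as written. Dualising $0\to\Delta_\zeta(m)\to T\to\Delta_\zeta(m')\to 0$ yields $0\to\nabla_\zeta(m')\to T^*\to\nabla_\zeta(m)\to 0$, so it is $T^*$, not $T$, that visibly carries a $\nabla$-filtration; what you actually need is $T\cong T^*$. But duality sends the class in $\Ext^1(\Delta_\zeta(m'),\Delta_\zeta(m))$ to one in $\Ext^1(\nabla_\zeta(m),\nabla_\zeta(m'))$, a different Ext group, so there is no ``invariance of the extension class'' to track, and establishing $T\cong T^*$ directly is essentially as hard as the tilting claim itself. Your alternative --- take the highest-weight indecomposable summand of a tilting tensor power such as $\Delta_\zeta(1)^{\otimes m}$ and compare characters --- is the paper's strategy in spirit and does work; the paper's specific choice $\Delta_\zeta(a\ell-1)\otimes\Delta_\zeta(b+1)$ is sharper, since Clebsch--Gordan lists its Weyl factors as $\Delta_\zeta(m),\Delta_\zeta(m-2),\dots,\Delta_\zeta(m-2b-2)$ and linkage picks out exactly the first and last, avoiding any inductive character computation.
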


\begin{proof} This result is certainly well known. As we haven't been able to find a reference where this is explicitly stated we sketch the easy proof.

Denote by $\CL_\zeta(m)$ the simple $U_\zeta$-module with highest weight $m \in \N$ (not to be confused with the simple $E_\zeta(d,r)$-module $L_\zeta(m)$). It follows from the strong linkage principle \cite{A-SL} (or by direct calculations) that $\CL_\zeta(m) = \Delta_\zeta(m)$ if and only if $m$ satisfies the conditions in (1); in particular, (1) holds. 

So assume $m = a\ell + b$ with $a, b$ as in (2). The tensor product $\Delta(a\ell -1) \otimes_\C \Delta_\zeta(b+1)$ has a Weyl filtration with factors $\Delta_\zeta(m), \Delta_\zeta(m-2), \cdots , \Delta_\zeta(m-2(b+1))$. Note that the first and the last factors belong to the same linkage class and that none of the other factors are in this class. Hence by the linkage principle (loc. cit.) there is a summand  $\CT$ of $\Delta_\zeta(a\ell -1) \otimes_\C \Delta_\zeta(b+1)$ which has these two Weyl factors, i.e. fits into an exact sequence
$$ 0\lr\Delta_\zeta(m)\lr\CT\lr \Delta_\zeta(m-2b-2)\lr 0.$$
By case (1) we see that $\Delta_\zeta(a\ell -1) \otimes_\C \Delta_\zeta(b+1)$ is tilting. Hence so is our summand $\CT$. 
The proof of case (2) will therefore be complete if we prove that $\CT$ is indecomposable. This in turn would follow if there were no non-trivial homomorphisms $\CT$ of $\Delta_\zeta(a\ell -1) \otimes_\C  \Delta_\zeta(b+1) \lr \CL_\zeta(m)$.  To check the last statement, we need the quantised Steinberg tensor product theorem,
\cite{AW} Theorem 1.10, for simple modules, $\CL_\zeta(m) \simeq \CL_\zeta(a\ell)\otimes \CL_\zeta(b)$ (again in the case at hand this can alternatively be checked by direct calculations). 

Using this together with the self-duality of the simple modules, and the result in (1) we get
$\Hom_{U_\zeta} (\Delta_\zeta(a\ell-1) \otimes_\C \Delta_\zeta(b+1), \CL_\zeta (m)) \simeq 
\Hom_{U_\zeta} (\CL_\zeta(a\ell-1) \otimes_\C \CL_\zeta(b+1), \CL_\zeta (m)) \simeq
\Hom_{U_\zeta} (\CL_\zeta((a-1)\ell) \otimes_\C \CL_\zeta(\ell-1)\otimes_\C \CL_\zeta(b+1), \CL_\zeta (a\ell) \otimes_\C\CL_\zeta(b)) \simeq
\Hom_{U_\zeta} (\CL_\zeta((a-1)\ell) \otimes_\C \CL_\zeta(b+1)\otimes_\C \CL_\zeta(b), \CL_\zeta (a\ell) \otimes_\C\CL_\zeta(\ell -1)) \simeq
\Hom_{U_\zeta} (\CL_\zeta((a-1)\ell) \otimes_\C \CL_\zeta(b+1)\otimes_\C \CL_\zeta(b), \CL_\zeta ((a+1)\ell -1)).
$
Note that the last Hom-space is $0$ because by our condition on $b$ the weight $(a+1)\ell -1$ is strictly larger than all weights of $\CL_\zeta((a-1)\ell) \otimes_\C \CL_\zeta(b+1)\otimes_\C \CL_\zeta(b)$.
\end{proof}
Since the the weights of $\Delta_\zeta(m)$ are $m, m-2, \cdots , -m$, each occuring with multiplicity one  we deduce
\begin{cor}\label{cor:dimwt}
We have
$$
\dim \CT_\zeta(m)_t=
\begin{cases}
1\text{ if }t=m-2i, \; 0\leq i\leq m \text { in case (1)}\\
2\text{ if }t=m-2j, \; b+1\leq j\leq m-(b+1) \text { in case (2)}\\
1\text{ if }t=m-2j,\text{ with } 0\leq j\leq b\text{ or }m\geq j\geq m-b \text { in case (2)}\\
0\text{ otherwise.}
\end{cases}
$$
\end{cor}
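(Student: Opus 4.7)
The plan is to read off both cases directly from Proposition \ref{prop:tilting} together with the standard fact that for $\fsl_2$ the Weyl module $\Delta_\zeta(n)$ has weights $n, n-2, \dots, -n$, each of multiplicity one. Since taking a weight space is an exact functor on the category of $\U_\zeta$-modules with integrable weight-space decomposition, any short exact sequence restricts to a short exact sequence of weight spaces, which splits because the intermediate object has a weight-grading. Consequently, in any short exact sequence of weight modules, weight multiplicities are additive.

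In case (1) of Proposition \ref{prop:tilting}, $\CT_\zeta(m) \simeq \Delta_\zeta(m)$, so the weights are exactly $m-2i$ for $0 \leq i \leq m$, each of multiplicity one, giving the first line of the corollary.

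In case (2), we have the non-split sequence
$$
0 \lr \Delta_\zeta(m) \lr \CT_\zeta(m) \lr \Delta_\zeta(m-2b-2) \lr 0,
$$
so for each weight $t$ we get $\dim \CT_\zeta(m)_t = \dim \Delta_\zeta(m)_t + \dim \Delta_\zeta(m-2b-2)_t$. The first term contributes $1$ for $t = m-2j$ with $0 \leq j \leq m$ and $0$ otherwise. The second term has weights $m-2b-2-2i$ for $0 \leq i \leq m-2b-2$; reparametrising $j = i+b+1$ this gives a contribution of $1$ exactly when $t = m-2j$ with $b+1 \leq j \leq m-b-1$. Adding these two contributions produces the three cases (multiplicity $1$ for $0\leq j\leq b$ or $m-b\leq j\leq m$, multiplicity $2$ for $b+1\leq j\leq m-b-1$, and $0$ otherwise), exactly as stated.

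There is no real obstacle here, since Proposition \ref{prop:tilting} already contains all the nontrivial input; the corollary is just bookkeeping of the weights in the two Weyl composition factors. The only thing worth double-checking is that the ranges of $j$ in the two Weyl modules are disjoint and together cover $\{0,1,\dots,m\}$ in the claimed pattern, which is immediate from $0 \leq b < \ell-1 \leq m-1$.
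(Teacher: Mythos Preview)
Your proof is correct and takes essentially the same approach as the paper: the paper's entire argument is the single sentence ``Since the weights of $\Delta_\zeta(m)$ are $m, m-2, \cdots, -m$, each occurring with multiplicity one we deduce\ldots'', and you have simply spelled out the additivity of weight multiplicities in the short exact sequence of Proposition~\ref{prop:tilting} and matched up the ranges of $j$. One minor wording quibble: in your final sentence the ranges of $j$ for the two Weyl modules are not disjoint (the second is contained in the first); what you mean, and what your computation already shows, is that the three subranges $\{0,\dots,b\}$, $\{b+1,\dots,m-b-1\}$, $\{m-b,\dots,m\}$ partition $\{0,\dots,m\}$, which indeed follows from $m-2b-2\geq 0$ in case~(2).
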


\subsection{Mulplicities and dimensions}

Now the equation 
\be\label{eq:decomp}
\Delta_\zeta(d)^{\ot r}\cong\oplus_{m=0}^{dr}\mu_\zeta(d,r;m)\CT_\zeta(m).
\ee
may be used to relate the multiplicities to the dimensions of the weight spaces. For this purpose, we make the following definitions.

\begin{definition}
\begin{enumerate}
\item Let $w(d,r;m):=\dim(\Delta_\zeta(d)^{\ot r})_m$. This is independent of $\zeta$.
\item Let $a_m=a_m(d,r):=|\{(i_1,\dots,i_r)\mid 0\leq i_j\leq d\;\forall j,\;\;\sum_ji_j=m\}|$.
\end{enumerate}
\end{definition}
Note that $a_m = a_{dr-m}$ for all $m$.

\begin{lem}\label{lem:rec}
\begin{enumerate}
\item For $0\leq m\leq dr,\;\;m\equiv dr(\mod 2)$,  $w(d,r;m)=a_{\frac{m+dr}{2}}$.
\item We have
$$
w(d,r;m)=\mu_\zeta(d,r;m)+\sum_{j=1}^{\frac{dr-m}{2}}\dim \CT_\zeta(m+2j)_m\mu_\zeta(d,r;m+2j).
$$
\end{enumerate}
\end{lem}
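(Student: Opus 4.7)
The plan is to prove both assertions by straightforward combinatorics based on the weight-space decomposition, with part (1) a direct counting argument and part (2) an immediate consequence of the tilting decomposition \eqref{eq:decomp} together with Corollary~\ref{cor:dimwt}.

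For part (1), I would start from the fact that the Weyl module $\Delta_\zeta(d)$ has one-dimensional weight spaces of weights $d, d-2, \dots, -d$, so a weight vector in $\Delta_\zeta(d)^{\otimes r}$ of weight $m$ is specified by a tuple of weights $(d - 2i_1, d - 2i_2, \dots, d - 2i_r)$ with $0 \le i_j \le d$ summing to $m$. Summing the entries gives $rd - 2\sum_j i_j = m$, hence $\sum_j i_j = (rd-m)/2$. Thus $w(d,r;m) = a_{(rd-m)/2}$. The identity $a_m = a_{dr-m}$ (obtained by sending $i_j \mapsto d - i_j$) then yields $w(d,r;m) = a_{(dr+m)/2}$, as required. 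Note that this also makes transparent the parity condition $m \equiv dr \pmod 2$.

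For part (2), I would apply the functor $(-)_m$ (taking weight $m$) to the decomposition
\[
\Delta_\zeta(d)^{\otimes r} \cong \bigoplus_{t=0}^{dr} \mu_\zeta(d,r;t)\, \CT_\zeta(t),
\]
yielding $w(d,r;m) = \sum_{t=0}^{dr} \mu_\zeta(d,r;t) \dim \CT_\zeta(t)_m$. By Corollary~\ref{cor:dimwt}, the weights of $\CT_\zeta(t)$ all satisfy $|m| \le t$ and $m \equiv t \pmod 2$, so the only nonzero contributions come from $t = m + 2j$ with $0 \le j \le (dr-m)/2$. Moreover, since $m$ is the highest weight of $\CT_\zeta(m)$, it occurs with multiplicity one in $\CT_\zeta(m)$; separating off this $j=0$ term produces exactly the stated formula.

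There is really no serious obstacle here; the only point that requires care is the bookkeeping in part (1), specifically verifying that the two natural parametrisations of a weight vector (via $(i_1, \dots, i_r)$ summing to $(rd-m)/2$ versus summing to $(dr+m)/2$) match up via the symmetry $a_m = a_{dr-m}$. Both pieces of the lemma should follow with minimal additional effort once one has set up the weight bookkeeping and cited Corollary~\ref{cor:dimwt} for the shape of the weight diagram of $\CT_\zeta(t)$.
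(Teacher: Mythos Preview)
Your proposal is correct and follows essentially the same approach as the paper: part (1) is the same elementary weight-counting argument (the paper phrases it via the $q$-character $[d+1]^r$, which encodes exactly your tuple count), and part (2) is precisely the paper's argument of taking $m$-weight spaces on both sides of \eqref{eq:decomp}. Your write-up is in fact more detailed than the paper's one-sentence justification, including the explicit use of the symmetry $a_m=a_{dr-m}$.
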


The first statement follows easily from the fact that $\Delta_\zeta(d)^{\ot r}$ has $q$-character $[d+1]^r$,
while the second arises from \eqref{eq:decomp} by taking the dimension of the $m$-weight spaces on both sides,
taking into account that $\CT_\zeta(t)$ has only weights $m$ of the form $m=t-2i$, $i\geq 0$.

Lemma \ref{lem:rec} (2) may be used to determine the multiplicities $\mu_\zeta(d,r;m)$ recursively.
We shall do this for the case considered in Example \ref{ex:sub}. 

\begin{example}\label{ex:sub2}
Let us compute $\mu_\zeta(d,r,dr-2)$. By Lemma \ref{lem:rec} (2), $w(d,r;dr-2)=\mu_\zeta(d,r;dr-2)+\dim\CT_\zeta(dr)_{dr-2}$.
Moreover it follows from Corollary \ref{cor:dimwt} that
$$
\dim\CT_\zeta(dr)_{dr-2}=
\begin{cases}
2\text{ if }b=0\\
1\text{ if }b\neq 0.\\
\end{cases}
$$
Noting that by Lemma \ref{lem:rec} (1) we have $w(d, r, dr-2) = a_{dr-1} =a_1 = r$ we get 
$$
\mu_\zeta(d,r;dr-2)=
\begin{cases}
r-1\text{ if }\ell\not | dr\\
r-2\text{ if }\ell | dr,\\
\end{cases}
$$
in accord with \eqref{eq:ex}.
\end{example}

\begin{example}
In Example 6.5 we considered multiplicities $\mu_\zeta(d,r;t)$ where $t$ was large, namely $t =
dr-2$. We now consider the case where $t$ is small.

Assume $t< \ell$. Then we may apply Formula 3.20 (1) in \cite{AP}. Using the notation from Section 4.1 this formula reads in our case
$$ \mu_\zeta(d,r;t) = \sum_{j\geq 0} m(d,r;t+2j\ell) - \sum_{i>0} m(d,r; 2i\ell-t-2).$$
Recall that the multiplicities $m(d,r;t)$ are given by the recursion relation \eqref{eq:mrec}, i.e. they may be calculated by induction on $r$. 
\end{example}

In fact this formula is valid in general: maintaining the notation of Example 6.6 (except that the integer $t$ below may now be arbitrary) we have

\begin{prop}\label{prop:mu}
Let $t \in \N$. Then
\begin{enumerate}
\item[(1)] If $t \equiv -1 \quad (\mod \ell)$ then $\mu_\zeta(d,r;t) = m(d,r;t)$.

\item[(2)] If $t \not \equiv -1 \quad (\mod \ell)$ then, writing $t=a\ell+b$ with $0\leq b\leq\ell-2$, we have
$$ 
\begin{aligned}
\mu_\zeta(d,r;t) =& \sum_{j\geq 0} m(d,r;t+2j\ell) - \sum_{i\geq 1} m(d,r;t-2b-2+2i\ell)\\
=& \sum_{j\geq 0} m(d,r;t+2j\ell) - \sum_{i\geq a+1} m(d,r;2i\ell-t-2).\\
\end{aligned}
$$
\end{enumerate}
\end{prop}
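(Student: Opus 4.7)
The plan is to work in the Grothendieck group $K_0(\U_\zeta\text{-mod})$ and compare two expansions of $[\Delta_\zeta(d)^{\otimes r}]$. Since the character of $\Delta_\zeta(d)^{\otimes r}$ is $[d+1]^r$ and the recursion \eqref{eq:mrec} records exactly the Clebsch--Gordan decomposition of this character into characters of Weyl modules, one has
\[
[\Delta_\zeta(d)^{\otimes r}] \;=\; \sum_{s \geq 0} m(d,r;s)\,[\Delta_\zeta(s)].
\]
On the other hand, as recalled in Section 5.3, $\Delta_\zeta(d)^{\otimes r}$ is tilting, so
\[
[\Delta_\zeta(d)^{\otimes r}] \;=\; \sum_{m \geq 0} \mu_\zeta(d,r;m)\,[\CT_\zeta(m)].
\]

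The next step is to invoke Proposition \ref{prop:tilting} to expand each $[\CT_\zeta(m)]$ in the Weyl basis: $[\CT_\zeta(m)] = [\Delta_\zeta(m)]$, with an additional summand $[\Delta_\zeta(m-2b'-2)]$ precisely when $m = a'\ell + b'$ with $a' \geq 1$ and $0 \leq b' \leq \ell-2$. Equating coefficients of $[\Delta_\zeta(t)]$ on the two sides yields
\[
m(d,r;t) \;=\; \mu_\zeta(d,r;t) \;+\; \sum_{m} \mu_\zeta(d,r;m),
\]
with the sum ranging over $m = a'\ell + b'$ satisfying $a' \geq 1$, $0 \leq b' \leq \ell-2$ and $m - 2b' - 2 = t$. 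A direct congruence check shows that the constraints force $b' \equiv -t-2 \pmod{\ell}$, which admits a representative in $[0,\ell-2]$ exactly when $t \not\equiv -1 \pmod{\ell}$. Hence in the case $t \equiv -1 \pmod{\ell}$ the sum is empty and $\mu_\zeta(d,r;t) = m(d,r;t)$, proving part (1). Otherwise, writing $t = a\ell + b$ with $0 \leq b \leq \ell-2$, one finds a unique contributing $m$, namely $\phi(t) := (a+2)\ell - b - 2$, and the recurrence becomes
\[
m(d,r;t) \;=\; \mu_\zeta(d,r;t) + \mu_\zeta(d,r;\phi(t)).
\]

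Now I would iterate. Since $\phi(t) \equiv -b-2 \pmod{\ell}$ is again not $-1 \pmod{\ell}$ (as $0 \leq b \leq \ell-2$), the same recurrence applies at $\phi(t)$, and similarly at each subsequent iterate. Rewriting the recurrence as $\mu_\zeta(d,r;t) = m(d,r;t) - \mu_\zeta(d,r;\phi(t))$ and unfolding gives the finite alternating sum
\[
\mu_\zeta(d,r;t) \;=\; \sum_{k \geq 0} (-1)^k\,m(d,r;\phi^k(t)),
\]
the finiteness coming from $m(d,r;s) = 0$ for $s > dr$ while $\phi^k(t) \to \infty$. A short induction yields the closed forms $\phi^{2j}(t) = t + 2j\ell$ and $\phi^{2j+1}(t) = t - 2b - 2 + 2(j+1)\ell$. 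Splitting the alternating sum by the parity of $k$ produces the first displayed identity in part (2). The equivalence with the second form follows by the reindexing $i \mapsto i + a$ in the odd-index sum, using $t - 2b - 2a\ell = -t$ (which is just $t = a\ell + b$) to convert $t - 2b - 2 + 2i\ell$ into $2(i+a)\ell - t - 2$.

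The principal obstacle is purely combinatorial bookkeeping: pinning down the unique $m$ contributing to the recursion at each $t$, verifying that iteration of $\phi$ never enters the forbidden residue class $-1 \pmod{\ell}$, and computing $\phi^k(t)$ in closed form. All three reduce to short residue calculations modulo $\ell$; once these are in hand, the rest is formal manipulation in the Grothendieck group.
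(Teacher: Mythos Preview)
Your proof is correct and follows essentially the same approach as the paper: both compare the Weyl-module and tilting-module expansions of $\Delta_\zeta(d)^{\otimes r}$ (the paper via characters, you via the Grothendieck group---equivalent formulations), extract the recurrence $m(d,r;t)=\mu_\zeta(d,r;t)+\mu_\zeta(d,r;g(t))$ from Proposition~\ref{prop:tilting}, and then iterate; your $\phi$ is exactly the paper's $g$, and your closed forms for $\phi^k(t)$ recover the stated sums. Your write-up is in fact somewhat more explicit about the residue bookkeeping than the paper's.
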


\begin{proof} This follows easily from the description in Proposition \ref{prop:tilting} of the indecomposable tilting modules $\CT_\zeta(m)$, by taking characters in the relation $\Delta_\zeta(d)^{\ot r}\cong \oplus_m\mu(d,r;m)\CT_\zeta(m)$. Let $\CC_1$ be the set of positive integers in case (1)
of Proposition \ref{prop:tilting}, and similarly $\CC_2$ those in case (2).

 If we denote by $c_t$ the $q$-character of $\Delta_q(t)$,
then Proposition \ref{prop:tilting} shows that 
if $t\in\CC_1$, then $\ch(\CT_\zeta(t))=c_t$, while if $t\in\CC_2$, then $\ch(\CT_\zeta(t))=c_t+c_{t-2b-2}$.
Now substitute these values and compare coefficients of $c_t$ in the equation
$$
\sum_{t\in\N}m(d,r;t)c_t=\sum_{t\in\CC_1}\mu_\zeta(d,r;t)\ch(\CT_\zeta(t))+\sum_{t\in\CC_2}\mu_\zeta(d,r;t)\ch(\CT_\zeta(t)).
$$
One obtains $\mu_\zeta(d,r;t)=m(d,r;t)$ if $t\equiv -1(\mod\ell)$, while if $t=a\ell+b$ with $a\geq 0$ and $0\leq b\leq \ell-2$, we have
\be\label{eq:sum}
m(d,r;t)=\mu_\zeta(d,r;t)+\mu_\zeta(d,r;(a+2)\ell-b-2).
\ee
Now for any integer $t=a\ell+b\geq 0$ such that $t\not\equiv -1(\mod \ell)$, write $g(t)= (a+2)\ell-b-2$; then $g(t)\not\equiv-1(\mod\ell)$,
and the relation above reads $m(d,r;t)=\mu_\zeta(d,r;t)+\mu_\zeta(d,r;g(t))$. It follows that
$\mu_\zeta(d,r;t)=\sum_{i\geq 0}m(d,r;g^{2i}(t))-\sum_{j\geq 0}m(d,r;g^{2j+1}(t))$.
The statements (1) and (2) are now immediate.
\end{proof}

As these multiplicities are also dimensions of simple modules for our cellular algebra from Section 4 we 
may rewrite these formulae as follows (again using notation from Section 4.1)

\begin{cor} Let $t\in \N$. Then 
\begin{enumerate}
\item[(1)] If $t \equiv -1 \quad (\mod \ell)$ then $\dim_\C L_\zeta(t) = b(d,r;t)$.

\item[(2)] If $t \not \equiv -1 \quad (\mod \ell)$ then writing $t=a\ell+b$ with $0\leq b\leq\ell-2$, we have
$$
\dim_\C L_\zeta(t)  = \sum_{j\geq 0} b(d,r;t+2j\ell) - \sum_{i\geq a+1} b(d,r; 2i\ell-t-2).
$$
\end{enumerate}
\end{cor}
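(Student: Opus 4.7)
The plan is to assemble three results already established in the paper. First, Theorem~\ref{thm:mult} (via Corollary~\ref{tilt-simp}) identifies $\dim_\C L_\zeta(t)$ with the tilting multiplicity $\mu_\zeta(d,r;t) = (\CT_r : \CT_\zeta(t))$. Second, Proposition~\ref{prop:mu} expresses $\mu_\zeta(d,r;t)$ in terms of the generic multiplicities $m(d,r;s)$ arising from iterated Clebsch--Gordan. Third, equation \eqref{eq:fin} in the proof of Lemma~\ref{lem:basis} establishes $m(d,r;s) = b(d,r;s)$ for all $d,r,s$ (both sides satisfy the initial value \eqref{eq:case1} and the common recursion \eqref{eq:mrec}/\eqref{eq:brec}). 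Thus the corollary is simply Proposition~\ref{prop:mu} translated through the dictionary $\dim_\C L_\zeta(t) = \mu_\zeta(d,r;t)$ and $m = b$.

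Concretely, I would first invoke Theorem~\ref{thm:mult} to reduce the desired statement to the corresponding identities for $\mu_\zeta(d,r;t)$. For case~(1), where $t \equiv -1 \pmod{\ell}$, Proposition~\ref{prop:mu}(1) gives $\mu_\zeta(d,r;t) = m(d,r;t)$, and \eqref{eq:fin} then identifies this with $b(d,r;t)$. For case~(2), Proposition~\ref{prop:mu}(2) in the form with summation range $i \geq a+1$ gives
\[
\mu_\zeta(d,r;t) = \sum_{j\geq 0} m(d,r;t+2j\ell) - \sum_{i\geq a+1} m(d,r;2i\ell - t - 2),
\]
and one applies \eqref{eq:fin} term by term to replace each $m(d,r;\cdot)$ by $b(d,r;\cdot)$, yielding the claimed formula.

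Since every nontrivial ingredient has already been proved, there is no genuine obstacle here; the corollary is a bookkeeping statement translating between the representation-theoretic viewpoint (tilting multiplicities) and the diagrammatic viewpoint afforded by the cellularity result of Theorem~\ref{thm:cell}. The only minor point to note is the convention that $b(d,r;s) = 0$ unless $0 \leq s \leq dr$ and $s \equiv dr \pmod{2}$, which matches the analogous vanishing convention for $m(d,r;s)$ used in Proposition~\ref{prop:mu}; this ensures that the (a priori infinite) sums appearing in case~(2) are in fact finite and well defined.
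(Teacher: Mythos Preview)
Your proposal is correct and matches the paper's approach exactly: the paper presents this corollary as an immediate rewriting of Proposition~\ref{prop:mu}, invoking that the multiplicities $\mu_\zeta(d,r;t)$ equal $\dim_\C L_\zeta(t)$ (Theorem~\ref{thm:mult}) and that $m(d,r;s)=b(d,r;s)$ (equation~\eqref{eq:fin}).
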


Note that the numbers $b(d,r;t)$ are dimensions of the cell modules of the cellular algebra $pTL_{dr}(\wt A)p$,
which do not change under specialisation.


\subsection{Decomposition numbers} In this subsection we shall determine the decomposition numbers of the cellular
algebra $E_\zeta(d,r)$, and show how the weight multiplicities of the tilting modules are determined by these,
giving an alternative proof of Corollary \ref{cor:dimwt}. The algebra has cell modules $W_\zeta(t)$ as described in \S 4.2
and $\dim(W_\zeta(t))=b(d,r;t)$. If $L_\zeta(t)$ is the corresponding simple module, we write $d_{st}=[W_\zeta(t):L_\zeta(s)]$
for the multiplicity of $L_\zeta(s)$ in $W_\zeta(t)$. It is known by the theory of cellular algebras that the matrix $(d_{st})$
is lower unitriangular.

We have $\dim(L_\zeta(t))=\mu_\zeta(d,r;t)$, and therefore we clearly have
\be\label{eq:dec}
b(d,r;t)=\sum_{s\geq t}d_{st}\mu_\zeta(d,r;s).
\ee

\begin{thm}\label{thm:dec} Maintain the notation above. Suppose $\ell\in\N$ is such that $\ell=\ord(\zeta^2)$ and write
$\N=\CN_1\amalg\CN_2$, where $\CN_1=\{t\in\N\mid t\equiv-1(\mod \ell)\}$ and 
$\CN_2=\N\setminus\CN_1$. Let $g:\CN_2\lr\CN_2$ be the function defined in the proof of
Proposition \ref{prop:mu}, viz. if $t=a\ell+ b$ with $0\leq b\leq\ell-2$, then $g(t)=(a+1)\ell+\ell-b-2$.
Observe that $g(t)=t+2(\ell-b-1)\geq t+2$, and that $g(t)\equiv t(\mod 2)$.
\begin{enumerate}
\item For each $t\in\CN_2$ such that $0\leq t<g(t)\leq dr$, $t\equiv dr(\mod 2)$, there is a non-zero homomorphism 
$\theta_t:W_\zeta(g(t))\lr W_\zeta(t)$, which is uniquely determined up to scalar multiplication.
\item The $\theta_t$ are the only non-trivial homomorphisms between the cell modules of $E_\zeta(d,r)$.
\item Let $t\in\N$ be such that $0\leq t\leq dr$ and $t\equiv dr(\mod 2)$. If $t\in\CN_2$ and $g(t)\leq dr$, then 
$W_\zeta(t)$ has composition factors $L_\zeta(t)$ and $L_\zeta(g(t))$, each with multiplicity $1$. All other
cell modules are simple.
\item The decomposition numbers of $E_\zeta(d,r)$ are all equal to $0$ or $1$.
\end{enumerate}
\end{thm}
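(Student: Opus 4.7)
The plan is to combine the dimension data from Theorem~\ref{thm:mult} and Proposition~\ref{prop:mu} with an explicit construction of $\theta_t$ via the Temperley--Lieb embedding of $E_\zeta(d,r)$. By Theorem~\ref{thm:mult}, $\dim L_\zeta(t) = \mu_\zeta(d,r;t)$, while $\dim W_\zeta(t) = b(d,r;t) = m(d,r;t)$ by Lemma~\ref{lem:basis} and the cellular structure. First I would establish the telescoping identity
\[
b(d,r;t) = \mu_\zeta(d,r;t) + \mu_\zeta(d,r;g(t)) \quad (t \in \CN_2),
\]
(interpreting $\mu_\zeta(d,r;g(t))$ as $0$ when $g(t) > dr$), while $b(d,r;t) = \mu_\zeta(d,r;t)$ for $t \in \CN_1$. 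This is immediate from Proposition~\ref{prop:mu} once one notes that $g^2(t) = t + 2\ell$, so that all but one term in the two alternating sums cancel pairwise. Combined with the standard cellular equation $b(d,r;t) = \sum_{s \geq t} d_{st} \mu_\zeta(d,r;s)$ (with $d_{tt} = 1$ and $d_{st} \in \Z_{\geq 0}$), this yields tight constraints on the decomposition matrix.

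The crux of the argument is the construction of $\theta_t$ in part~(1). I would use the identification $W_\zeta(t) = p_\zeta V^{\TL}_\zeta(t)$, where $V^{\TL}_\zeta(t)$ denotes the cell module of $\TL_{dr}(\C)$ at the specialisation $q \mapsto \zeta$, together with the classical representation theory of Temperley--Lieb algebras at roots of unity. For $g(t) \leq dr$, this theory provides a non-trivial homomorphism $\phi : V^{\TL}_\zeta(g(t)) \to V^{\TL}_\zeta(t)$, unique up to scalar, whose image is the simple submodule $L^{\TL}_\zeta(g(t)) \hookrightarrow V^{\TL}_\zeta(t)$. Since the Schur-type functor $V \mapsto p_\zeta V$ is exact, setting $\theta_t := p_\zeta \phi$ gives a map $W_\zeta(g(t)) \to W_\zeta(t)$ of image $L_\zeta(g(t)) = p_\zeta L^{\TL}_\zeta(g(t))$, which is non-zero precisely when $\mu_\zeta(d,r;g(t)) > 0$; uniqueness up to scalar descends from the one-dimensionality of the corresponding Hom-space in $\TL_{dr}(\C)$-mod.

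For parts~(3) and~(4), the existence of $\theta_t$ forces $d_{g(t),t} \geq 1$, and combining this with $d_{tt}=1$, non-negativity of the $d_{st}$, and the dimension identity from Step~1 forces $d_{g(t),t} = 1$ together with $d_{st} = 0$ for all other $s > t$. This pins down the composition structure of $W_\zeta(t)$ and shows that every decomposition number lies in $\{0, 1\}$. Part~(2) then follows formally: any non-trivial homomorphism $W_\zeta(s) \to W_\zeta(t)$ must have $L_\zeta(s)$ among the composition factors of $W_\zeta(t)$, so part~(3) restricts $s \in \{t, g(t)\}$, and the case $s = t$ produces only scalars via the simple head.

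The principal obstacle is Step~2, namely the construction of $\theta_t$ and the verification that it survives the idempotent truncation by $p_\zeta$. Invoking Temperley--Lieb representation theory at roots of unity is the shortest route, but a more self-contained alternative proceeds via Ringel duality: one identifies $W_\zeta(t) \simeq \Hom_{\U_\zeta}(\CT_r, \nabla_\zeta(t))$ and realises $\theta_t$ by post-composition with the unique-up-to-scalar $\U_\zeta(\fsl_2)$-homomorphism $\nabla_\zeta(g(t)) \to \nabla_\zeta(t)$, whose existence is dual to the Weyl-module structure underlying Proposition~\ref{prop:tilting}.
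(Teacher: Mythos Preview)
Your proposal is correct and shares its essential mechanism with the paper: both reduce to the Temperley--Lieb algebra $\TL_{dr,\zeta}(\C)$ via the exact idempotent-truncation functor $M\mapsto p_\zeta M$ and invoke the known composition structure of $\TL$ cell modules at roots of unity from \cite{GL98}. The packaging differs, however. The paper applies the functor wholesale to the composition series of each $\TL$-cell module (after verifying that $p_\zeta$ annihilates no simple $\TL_{dr,\zeta}(\C)$-module, via quasi-heredity when $\zeta+\zeta^{-1}\neq 0$ and a direct check otherwise), obtaining (1)--(4) at once and without any recourse to Proposition~\ref{prop:mu}. You instead extract only the single map $\theta_t$ from the functor and then rely on the telescoped identity $b(d,r;t)=\mu_\zeta(d,r;t)+\mu_\zeta(d,r;g(t))$ together with \eqref{eq:dec} to force the decomposition numbers. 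Two remarks on this: first, the Remark immediately following the theorem makes clear that one purpose of Theorem~\ref{thm:dec} is to give an \emph{independent} route to the multiplicities $\mu_\zeta(d,r;t)$, so a proof that already imports Proposition~\ref{prop:mu} sacrifices that independence; second, your step ``the existence of $\theta_t$ forces $d_{g(t),t}\geq 1$'' tacitly requires $\theta_t\neq 0$, i.e.\ $p_\zeta L^{\TL}_\zeta(g(t))\neq 0$, which is exactly the non-annihilation point the paper checks and which you leave open. Your Ringel-duality alternative via $W_\zeta(t)\simeq\Hom_{\U_\zeta}(\CT_r,\nabla_\zeta(t))$ is a genuinely different construction that the paper does not pursue.
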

Note that (3) and (4) are formal consequences of (1) and (2).
\begin{proof}

We begin by observing that the conjecture is true when $d=1$. In this case $E_\zeta(1,r)=\TL_{r,\zeta}(\C)$, the structure of whose cell modules
(as well as all homomorphisms between them) is treated in \cite{GL98}. In particular, \cite[Theorem 5.3]{GL98} asserts
that (in our notation above) if $s\neq t$, then $L_\zeta(s)$ is a composition factor of $W_\zeta(t)$ if and only if 
$s$ satisfies both (i) $t+2\ell>s>t$ and (ii) $s+t+2\equiv 0(\mod 2\ell)$. It is an easy exercise to show that (i) and (ii) are
equivalent to (iii) $t\not\equiv-1(\mod\ell)$ and (iv) $s=g(t)$. This yields all the statements of the theorem for this case.

Next recall that $E_\zeta(d,r)\cong p_d(\zeta)\TL_{dr,\zeta}(\C)p_d(\zeta)$, where $p_d(\zeta)$ is the specialisation at
$\zeta$ of the idempotent $p_d$. Thus we may define the exact functor $\CF_d:\Mod(\TL_{dr,\zeta}(\C))\lr\Mod(E_\zeta(d,r))$, 
by $M\mapsto p_d(\zeta)M$, where $\Mod$ indicates the category of left modules for the relevant algebra. Now it is evident
from the description in \S\ref{ss:cell} of the cell module $W(t)$ and its basis $\CB(d,r;t)$, that $\CF_d(W_{\TL_{dr,\zeta}(\C)}(t))
=W_{E_\zeta(d,r)}(t)$ for all $t$ with $0\leq t\leq dr$ and $t+dr\in 2\Z$. 

Moreover by exactness, for any simple 
$\TL_{dr,\zeta}(\C)$-module $L$, $\CF_d(L)$ is either a simple $E_\zeta(d,r)$-module or zero. It follows thus
(and also from the explicit diagrammatic description), that $\CF_d(L_{\TL_{dr,\zeta}(\C)}(t))=L_{E_\zeta(d,r)}(t)$ 
whenever the latter is non-zero. Given the description in \S\ref{ss:cell} of the cellular structure,
and the fact that $\TL_{dr,\zeta}(\C)$ 
is quasi-hereditary when $\zeta\neq \zeta_4=\exp(\frac{\pi i}{2})$, $\CF_d$ does
not kill any non-trivial simple $\TL_{dr,\zeta}(\C)$-module (this may be checked directly when $\zeta=\zeta_4$).
The quasi-heredity of $\TL_{dr,\zeta}(\C)$ when $\zeta\neq \zeta_4$ is well known, but may be seen as follows.
Since $\zeta+\zeta\inv\neq 0$, if $t\in\N$, $0\leq t\leq dr$, $t\equiv dr(\mod 2)$, then
for any monic diagram $u:t\to dr$, we have $u^*u=(\zeta+\zeta\inv)^{\frac{dr-t}{2}}\id_t\neq 0$, and hence if
$u$ is thought of as an element of $W_\zeta(t)$, $(u,u)\neq 0$. Hence for any such $t$, $L_\zeta(t)\neq 0$.
Although it is not needed for the proof of the theorem,
the fact that if $L_{\TL_{dr,\zeta}(\C)}(t)\neq 0$ then $\CF_d(L_{\TL_{dr,\zeta}(\C)}(t))\neq 0$ is verified in the same way, but requires a
computation, using the recurrence (*) in Example \ref{ex:sub} above, to show that for a non-zero element
$u=p_dD\in W_{\zeta}(t)$, where $D:t\to dr$ is a monic diagram, we have $(u,u)\neq 0$. That such elements exist is easily verified.

By the case $d=1$ of the Theorem, or, more precisely, \cite[Theorem 5.3]{GL98} applied to $\TL_{dr,\zeta}(\C)$, 
if $t\in\CN_2$, $0\leq t< g(t)\leq dr$ and $t\equiv dr(\mod 2)$, then $W_{\TL_{dr,\zeta}(\C)}(t)$
has composition factors $L_{\TL_{dr,\zeta}(\C)}(t)$ and $L_{\TL_{dr,\zeta}(\C)}(g(t))$. All other 
cell modules for $\TL_{dr,\zeta}(\C)$ are simple. It follows from the last paragraph that similarly,
if $t\in\CN_2$, $0\leq t< g(t)\leq dr$ and $t\equiv dr(\mod 2)$, then $W_{E_{\zeta}(d,r)}(t)$
has composition factors $L_{E_{\zeta}(d,r)}(t)$ and $L_{E_{\zeta}(d,r)}(g(t))$, and that other 
cell modules for ${E_{\zeta}(d,r)}$ are simple. All statements in the Theorem are now easy consequences of
standard cellular theory.
 \end{proof}

\begin{rem}
\begin{enumerate}
\item As a consequence of Theorem \ref{thm:dec} the equation \eqref{eq:dec} implies \eqref{eq:sum}, and the other statements in that sentence.
Thus the $\mu_\zeta(d,r;t)$ are determined by Theorem \ref{thm:dec}.
\item Since the $w(d,r;t)$ are known (Lemma \ref{lem:rec}(1)), it follows from Lemma \ref{lem:rec}(2) that the dimensions of
the weight spaces $\CT_\zeta(dr)_m$ are determined by Theorem \ref{thm:dec}.
\end{enumerate}
\end{rem}

\end{document}